\theoremstyle{plain}
\theoremstyle{plain}
\newtheorem{theorem}{Theorem}[section]
\newtheorem{lemma}[theorem]{Lemma}
\theoremstyle{definition}
\newtheorem{defin}[theorem]{Definition}
\newtheorem{remark}[theorem]{Remark}
\newtheorem{example}{Example}
\theoremstyle{remark}
\def\bk{\color{black}}
\numberwithin{equation}{section}
\def\into{\int_{\Omega}}
\def\dis{\displaystyle}
\def\supp{\text{\text{supp}}}
\DeclareMathOperator{\sgn}{sgn}
\DeclareMathOperator{\R}{\mathbb{R}}
\newcommand{\car}[1]{\raise1pt\hbox{$\chi$}_{#1}}
\newcommand{\DM }{\mathcal{DM}^\infty }
\def\rn{\mathbb{R}^N}
\def\re{\mathbb{R}}
\begin{document}
\title{Bounded solutions for non-parametric mean curvature problems with nonlinear terms}

\author[D. Giachetti]{Daniela Giachetti}
\author[F. Oliva]{Francescantonio Oliva}
\author[F. Petitta]{Francesco Petitta}

\address{Dipartimento di Scienze di Base e Applicate per l' Ingegneria, Sapienza Universit\`a di Roma, Via Scarpa 16, 00161 Roma, Italia}
\address[Daniela Giachetti]{daniela.giachetti@sbai.uniroma1.it}
\address[Francescantonio Oliva]{francescantonio.oliva@uniroma1.it}
\address[Francesco Petitta]{francesco.petitta@uniroma1.it}

\keywords{prescribed mean curvature, functions of bounded variation, divergence-measure fields, non-parametric minimal surfaces, singular equations} \subjclass[2010]{35J25, 35J60,  35J75, 49Q05, 53A10}

\maketitle

 \begin{abstract}
In this paper we prove existence of nonnegative bounded solutions for  the non-autonomous  prescribed mean curvature problem in non-parametric form on an open bounded domain $\Omega$ of $\R^N$.  The  mean curvature,  that depends on the location of the solution $u$ itself,  is asked to be   of the form $f(x)h(u)$,  where $f$  is  a nonnegative function in $L^{N,\infty}(\Omega)$ and $h:\R^+\mapsto \R^+$ is merely  continuous and possibly unbounded  near zero. As a preparatory tool for our analysis we propose a purely PDE approach to the prescribed mean curvature problem not depending on the solution, i.e. $h\equiv 1$. This part, which  has its own independent interest,  aims to represent    a modern  and   up-to-date account on the subject. Uniqueness is also handled in presence of a decreasing nonlinearity.  The sharpness of the results is highlighted by mean of explicit examples. 
 \end{abstract}
\tableofcontents
\section{Introduction}

Consider an homogeneous Dirichlet boundary value problem related to the  elliptic equation 

\begin{equation}\label{intro}
\dis -\operatorname{div}\left(\frac{D u}{\sqrt{1+|D u|^2}}\right) = f (x)h(u) 
\end{equation}
on an  open bounded domain  $\Omega\subseteq \mathbb{R}^N$; here $f$ is a nonnegative function in $L^{N,\infty}(\Omega)$ and $h:\re^+ \mapsto\re^+$ is  a  continuous function   possibly unbounded near zero.

If $f=0$, then  \eqref{intro} is the well known minimal surface equation
\begin{equation}\label{homi}
\dis \operatorname{div}\left(\frac{D u}{\sqrt{1+|D u|^2}}\right) =0
\end{equation}
   the name deriving from the fact that, for a smooth  function $u$, the involved operator evaluates the mean curvature of the graph of $u$ at each point $(x,u(x))$; due to this fact such an operator is also called {\it non-parametric mean curvature operator}. The unique solvability of Dirichlet problems associated to \eqref{homi} is classical (\cite{js, bdgm, g76,giusti,w} and references therein); 
solutions are known  to exist and to be unique for smooth  boundary data provided $\Omega$ is mean convex; i.e.  $\partial\Omega$ has nonnegative mean curvature. 

Several instances of  (non-parametric) prescribed mean curvature equation of the type
\begin{equation}
\label{pb0}
\begin{cases}
\dis -\operatorname{div}\left(\frac{D u}{\sqrt{1+|D u|^2}}\right) = f(x) & \text{in}\;\Omega,\\
u=0 & \text{on}\;\partial\Omega,
\end{cases}
\end{equation}
have also been considered in the literature both in the case of constant and non-constant data $f$ starting from  \cite{se},  \cite{fg84}, \cite{g}, and \cite{g76,g78} to present a non-complete list. 

Prescribed mean curvature problems  as   \eqref{pb0} are known to formally represent the Euler-Lagrange equation of  a  functional as 
\begin{equation}\label{func}
\mathcal{A}(v)=\int_\Omega \sqrt{1+|\nabla v|^2}  \, dx - \into fv  dx\,, 
\end{equation}
involving the area functional. 

In order to better understand the basis of solvability of problems as in \eqref{pb0} one can formally integrate the equation in there in a smooth sub-domain of $A\subset\Omega$, and using the divergence theorem to obtain the following {\it necessary condition}
\begin{equation}\label{cheeg}
\left|\int_A f(x) dx\right| =\left| \int_{\partial A} \frac{D u}{\sqrt{1+|D u|^2}}\cdot \nu_A ds \right|< {\rm Per}(A)
\end{equation}
where ${\rm Per}(A)$ indicates the perimeter of $A$ and $\nu_A$ is the outer normal unit vector. 
That is, the existence of solutions for \eqref{pb0} is related with a smallness assumption on the datum $f$. This is typical feature of equations arising from functional with linear growth as, for instance,  the one driven by the  $1$-laplacian  (see for instance \cite{CT,KS}).

For constant datum $f=\lambda$ then \eqref{cheeg}  reduces to 
\begin{equation}\label{cheeg2}
|\lambda| < \frac{{\rm Per}(A)}{|A|}\,,
\end{equation}
for any smooth $A\subset\Omega$. The best (positive) constant satisfying \eqref{cheeg2} is known to be the {\it Cheeger constant} of $\Omega$ and this fact, again,  is reminiscent of some known  geometric interpretation for $1$-Laplace type problems (see for instance \cite{K},  the recent \cite{bem},  and the gentle introduction to the subject given in  \cite{leo}).   

 Solvability of  a constant prescribed mean curvature problem  was first noticed to be related to the mean curvature of $\Omega$ in  the celebrated paper \cite{se} by J. Serrin (see also \cite{g76, gt, ber09, csv} and references therein) where a stronger  {\it mean convexity} assumption on $\partial\Omega$ is given, namely (here the datum again $f=\lambda$)
$$
|\lambda|\leq (N-1) \mathcal{K}(y)\, \ \ \text{for all}\ \ y\in\partial\Omega,
$$
where $\mathcal{K}(y)$ is the mean curvature of $\partial\Omega$.

In \cite{g} M. Giaquinta  shows the unique solvability in the space of functions with bounded variation, in a variational sense,  if $f$ is measurable and there exists $\varepsilon_0>0$ such that for every smooth $A\subseteq \Omega$
\begin{equation}\label{giac}
\left|\int_A f(x) \,dx\right|  \leq (1-\varepsilon_0){\rm Per}(A)\,. 
\end{equation}

In \cite{g76} it is shown that 
$$
||f||_{L^N (\Omega)}< N\omega_{N}^{\frac1N} \,,
$$
is a general condition under which \eqref{giac} holds, where $\omega_N$ is the measure of the unit ball of $\rn$. Conditions as in  \eqref{giac} are known as  {\it non-extremal}  conditions (see \cite{ls}) and they represent the necessary and sufficient condition under which the functional \eqref{func} admits a minimum point.
For further considerations on the equality case in \eqref{cheeg} over $\Omega$ (i.e. the  {\it extremal case}) we refer to the very recent paper \cite{lc} and references therein. Although it is out of the scope of the present paper, equations as in \eqref{pb0} have been also considered in the framework of the so-called {\it Mean Curvature Measures} (see \cite{zie}, \cite{dt}, and  \cite{lc});  we want to stress  that our results are consistent and in continuity also with those ones.

\medskip 
Equation \eqref{intro} is a prescribed mean curvature equation with dependence on the location of the graph of the solution itself. 
From the purely theoretical point of view these type of equations naturally appear in many problems of  differential geometry   (see \cite{ber09}).  Concerning this case, i.e.  a right-hand side also depending on the solution we refer to the paper   \cite{m} of M. Miranda in which under monotonicity assumptions on the data the solvability of problems as in \eqref{intro} is dealt again in the "generalized" variational framework of \cite{g76}.  Both interior and global regularity result for solutions of such types of equations we   refer to \cite{Ger, ss, lie, gt, stevsch, po} and references therein. We also mention \cite{am, px, om} and references therein for a more recent account of related results, and \cite{vy} for an interesting application of an (intrinsic) sub- and super-solutions method to these type of problems.  

Both in the autonomous and the non-autonomous case, problems  as in \eqref{intro} arise   in particular  in the study of  combustible gas dynamics (see \cite{yy} and references therein) as well as in  surfaces capillary problem as pendant liquid drops (\cite{finn74,cf1,cf2, fg84}) and, as a curiosity, also in design of water-walking devices (\cite{ww}, see also \cite{lc}).

\medskip

The aim of this paper is to provide a sharp description of homogeneous boundary value problems involving  \eqref{intro} with (possibly weak) Lebesgue data and a purely PDE's approach. 

To better emphasize the main difficulties in treating such  problems we decide to work  first in the context of positive Lebesgue data, namely $f\in L^{N}(\Omega)$ with $f>0$. To the extensions to the cases of both nonnegative data and  of data in the (sharp) scale  of Marcinkiewicz spaces $f\in L^{N,\infty}(\Omega)$, which are nowadays quite customary,   we dedicate  (resp.) Section \ref{sec:fnonnegative} and Section \ref{52} below.

\medskip 

After the preparatory Section \ref{due} in which we set the basic machinery on $BV$ spaces (the natural space in which these problems are well settled), measure divergence vector fields and 
Anzellotti-Chen-Frid type theory of pairings, Section \ref{sec3} is devoted to present in a self-contained and up-to-date way the existence of bounded solutions to problem \eqref{pb0} that is  what is needed to our further  aims. 
The core of the paper is the content of Section \ref{4} in which under suitable smallness assumptions on the data we prove existence and (once expected) uniqueness of bounded solutions for homogeneaous boundary value problems associated to \eqref{intro}. As we already mentioned  Section \ref{5} is devoted to the extension of the previous results to the case of nonnegative data in $L^{N,\infty}(\Omega)$. The optimality of the smallness assumption on the data will be also discussed by mean of  explicit examples of solutions in Section \ref{533}. 

\bk 

\subsection*{Notation}

Here  $\Omega$  will always be an open bounded subset of $\R^N$ ($N\ge 2$) with Lipschitz boundary.
We denote by $\mathcal H^{N-1}(\partial E)$ (also  ${\rm Per}(E)$ somewhere) the $(N - 1)$-dimensional Hausdorff measure of the boundary of a set $E$,   while $|E|$ stands for its $N$-dimensional  Lebesgue measure.

 We denote by $\chi_{E}$ the characteristic function of a set $E$. For a fixed $k>0$, we use the truncation functions $T_{k}:\R\to\R$ and $G_{k}:\R\to\R$ defined, resp.,  by
\begin{align*}
T_k(s):=&\max (-k,\min (s,k))\ \ \text{\rm and} \ \ G_k(s):=s- T_k(s).
\end{align*}
We will also made use of  the following auxiliary function defined for $s\in\re^+$
\begin{align}\label{Vdelta}
\displaystyle
V_{\delta}(s):=
\begin{cases}
1 \ \ &  0\le s\le \delta, \\
\displaystyle\frac{2\delta-s}{\delta} \ \ &\delta <s< 2\delta, \\
0 \ \ &s\ge 2\delta.
\end{cases}
\end{align}
We denote by $\mathcal{S}_p$ the best constant in the Sobolev inequality ($1\leq p< N$), that  is
\begin{equation}\label{sob}
||v||_{L^{p^*}(\Omega)} \le \mathcal{S}_p ||v||_{W^{1,p}_0(\Omega)}, \ \ \forall v \in W^{1,p}_0(\Omega),\end{equation}
where $p^*=\frac{Np}{N-p}$. It is known that 
$$\displaystyle \lim_{p\to 1^{+}} \mathcal{S}_p = \mathcal{S}_1=(N\omega_{N}^{\frac{1}{N}})^{-1}\,,$$
where   	$\omega_{N}$ is the volume of the unit sphere of $\rn$.

\medskip 

If not otherwise specified, we will denote by $C$ several positive constants whose value may change from line to line and, sometimes, on the same line. These values will only depend on the data but they will never depend on the indexes of the sequences we will gradually introduce. Let us explicitly mention that we will not relabel an extracted compact subsequence.

Finally for the sake of simplicity, where no ambiguity is possible,  we  use the following notation for the Lebesgue integral of a function $f$ 
$$
 \int_\Omega f:=\int_\Omega f(x)\ dx\,.
$$

\section{Basic tools}  \label{due}
\subsection{Basics on $BV$ spaces and the area integral} 
We refer to \cite{afp} for a complete account on $BV$-spaces and, for the sake of brevity,  for further standard notations not mentioned here.

Let us define  
$$BV(\Omega):=\{ u\in L^1(\Omega) : Du \in \mathcal{M}(\Omega)^N \}.$$ 
By $Du \in \mathcal{M}(\Omega)^N$ we mean that  each distributional partial derivative of $u$ is a bounded Radon measure; the total variation of the vector valued measure $D u$ is given by
$$\displaystyle |D u| = \sup\left\{\int_\Omega u\sum_{i=1}^{N}  \operatorname{\frac{\partial \phi_i}{\partial  x_i}}, \ \phi_i \in C^1_0(\Omega, \mathbb{R}), \ |\phi_i|\le 1, \forall i=1,...,N\right\}.$$

We underline that the $BV(\Omega)$ space endowed with the norm  
$$\displaystyle ||u||_{BV(\Omega)}=\int_{\partial\Omega}
|u|\, d\mathcal H^{N-1}+ \int_\Omega|Du|,$$
is a Banach space.
By $BV_{\rm loc}(\Omega)$ we mean the space of functions in $BV(\omega)$ for every open set $\omega \subset\subset\Omega$.
\\
For a given Radon measure $\mu$ we will frequently use that it can be uniquely decomposed as 
$\mu=\mu^a+\mu^s$
where $\mu^a$ is absolutely continuous with respect to the Lebesgue measure while $\mu^s$ is concentrated on a set of zero Lebesgue measure.

If $u\in BV(\Omega)$ one can give sense to the measure $\sqrt{1+|Du|^2}$ by defining it as  
$$\displaystyle  \sqrt{1+ | D u|^2} (E)= \sup\left\{\int_E \phi_{N+1} - \int_E u\sum_{i=1}^{N}  \operatorname{\frac{\partial \phi_i}{\partial x_i}}, \ \phi_i \in C^1_0(\Omega, \mathbb{R}), \ |\phi_i|\le 1, \forall i =1,...,N+1\right\}\,,$$
for any Borel set $E\subseteq \Omega$. 
We will frequently write  
$$\int_\Omega \sqrt{1+ | D u|^2}$$ 
meaning the total variation of the  $\mathbb{R}^{N+1}$-valued measure which formally represents $(\mathcal{L}^N, Du)$. Indeed, if $u$ is smooth,  then 
$$|(\mathcal{L}^N, \nabla u)| (\Omega)=\int_\Omega \sqrt{1+ |\nabla u|^2}$$
gives 
the area of the graph of $u$. 
In general, it simply follows from the decomposition in absolutely continuous and singular part with respect to the Lebesgue measure that one can write   
$$
	\sqrt{1+ | D u|^2}= \sqrt{1+ | D^a u|^2}\mathcal{L}^N+|D^s u| \,.
$$
In the sequel we will use the following semicontinuity classical results; firstly, the  functional $$J_1(v)=\int_\Omega \sqrt{1+ | D v|^2}\varphi + \int_{\partial\Omega} |v|\varphi \, d\mathcal H^{N-1} , \ \ {\text{for all} \ 0\le \varphi \in C^1(\overline{\Omega})}$$ 
is lower semicontinuous in $BV(\Omega)$ with respect to the $L^1(\Omega)$ convergence. On the other hand  the functional $$J_2(v) = \int_\Omega \sqrt{1-|v|^2}\varphi \ \ {\text{for all} \ 0\le \varphi \in C^1(\overline{\Omega})}$$ is weakly upper semicontinuous with respect to the $L^1(\Omega)$ convergence (see Corollary $3.9$ of \cite{brezis}).

\subsection{The Anzellotti-Chen-Frid theory} In order to be self-contained we  summarize the $L^\infty$-divergence-measure vector fields theory due to \cite{An} and \cite{CF}. We denote by 
$$\DM(\Omega):=\{ z\in L^\infty(\Omega)^N : \operatorname{div}z \in \mathcal{M}(\Omega) \},$$
and by $\DM_{\rm loc}(\Omega)$ its local version, namely the space of bounded vector field $z$ with $\operatorname{div}z \in \mathcal{M}_{\rm loc}(\Omega)$.

In \cite{An}, if $v \in BV(\Omega) \cap C(\Omega)$, the following distribution $(z,Dv): C^1_c(\Omega)\to \mathbb{R}$ is considered: 
\begin{equation}\label{dist1}
\langle(z,Dv),\varphi\rangle:=-\int_\Omega v^*\varphi\operatorname{div}z-\int_\Omega
vz\cdot\nabla\varphi,\quad \varphi\in C_c^1(\Omega),
\end{equation}
where $v^*$ is the precise representative for $v$. 
In \cite{mst2} and \cite{C} the authors prove that $(z, Dv)$ is well defined if $z\in \DM(\Omega)$ and $v\in BV(\Omega)\cap L^\infty(\Omega)$ since one can show that $v^*\in L^\infty(\Omega,\operatorname{div}z)$. Also observe that, if $\operatorname{div}z$ is a function then $v^*$ can be substituted by $v$ in \eqref{dist1}.

Moreover in \cite{dgs} it is  shown that \eqref{dist1} is well posed if $z\in \DM_{\rm loc}(\Omega)$ and $v\in BV_{\rm loc}(\Omega)\cap L^1_{\rm loc}(\Omega, \operatorname{div}z)$; it holds
\begin{equation*}\label{finitetotal}
|\langle   (z, Dv), \varphi\rangle| \le ||\varphi||_{L^{\infty}(U) } ||z||_{L^\infty(U)^N} \int_{U} |Dv|\,,
\end{equation*}
for all open set $U \subset\subset \Omega$ and for all $\varphi\in C_c^1(U)$. One has
\begin{equation*}\label{finitetotal1}
\left| \int_B (z, Dv) \right|  \le  \int_B \left|(z, Dv)\right| \le  ||z||_{L^\infty(U)^N} \int_{B} |Dv|\,,
\end{equation*}
for all Borel sets $B$ and for all open sets $U$ such that $B\subset U \subset \Omega$.
We recall that  every $z \in \mathcal{DM}^{\infty}(\Omega)$ possesses a weak trace on $\partial \Omega$ of its normal component which is denoted by
$[z, \nu]$, where $\nu(x)$ is the outward normal unit vector defined for $\mathcal H^{N-1}$-almost every $x\in\partial\Omega$ (see  \cite{An}). Moreover, it holds
\begin{equation*}\label{des1}
||[z,\nu]||_{L^\infty(\partial\Omega)}\le ||z||_{L^\infty(\Omega)^N},
\end{equation*}
and also,  if $z \in \mathcal{DM}^{\infty}(\Omega)$ and $v\in BV(\Omega)\cap L^\infty(\Omega)$, that
\begin{equation}\label{des2}
v[z,\nu]=[vz,\nu],
\end{equation}
(see \cite{C}).\\
Furthermore, if $z\in \DM_{\rm loc}(\Omega)$ and $v\in BV(\Omega)\cap L^\infty(\Omega)$ such that $v^*\in L^1(\Omega,\operatorname{div}z)$,  then $vz\in \DM(\Omega)$ and a weak trace can be defined as well as the following Green formula (\cite{dgs}):
\begin{lemma}\label{21}
	Let $z\in \DM_{\rm loc}(\Omega)$ and $v \in BV(\Omega)\cap L^{\infty}(\Omega)$ such that $v^*\in L^1(\Omega, \operatorname{div}z)$ then it holds
	\begin{equation}\label{green}
		\int_{\Omega} v^* \operatorname{div}z + \int_{\Omega} (z, Dv) = \int_{\partial \Omega} [vz, \nu] \ d\mathcal H^{N-1}.
	\end{equation}
\end{lemma}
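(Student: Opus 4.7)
The plan is to reduce the Green formula to a Leibniz-type identity for $\operatorname{div}(vz)$ together with the classical Gauss--Green relation for the single divergence-measure field $vz$.

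The first step is to identify $\operatorname{div}(vz)$ as a Radon measure on $\Omega$ via the pointwise product rule
\begin{equation*}
\operatorname{div}(vz)=v^{*}\operatorname{div}z+(z,Dv).
\end{equation*}
By the definition \eqref{dist1}, for every $\varphi\in C_c^{1}(\Omega)$ one has
\begin{equation*}
\langle(z,Dv),\varphi\rangle+\int_{\Omega}v^{*}\varphi\operatorname{div}z=-\int_{\Omega}vz\cdot\nabla\varphi=\langle\operatorname{div}(vz),\varphi\rangle,
\end{equation*}
the last equality being just the distributional definition of the divergence of the $L^{\infty}$ field $vz$. The local bound for $(z,Dv)$ stated right before the lemma, combined with the hypothesis $v^{*}\in L^{1}_{\rm loc}(\Omega,\operatorname{div}z)$, shows that both $(z,Dv)$ and $v^{*}\operatorname{div}z$ are locally finite Radon measures on $\Omega$. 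Hence the distributional identity lifts to an identity of Radon measures; in particular $vz\in\DM_{\rm loc}(\Omega)$, and under the integrability that makes the integrals in \eqref{green} globally finite, $vz\in\DM(\Omega)$.

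The second step is to integrate this measure identity over $\Omega$. Since $vz\in\DM(\Omega)$, the Anzellotti--Chen--Frid trace theory produces a normal trace $[vz,\nu]\in L^{\infty}(\partial\Omega)$ satisfying the global Gauss--Green relation
\begin{equation*}
\operatorname{div}(vz)(\Omega)=\int_{\partial\Omega}[vz,\nu]\,d\mathcal H^{N-1},
\end{equation*}
which one obtains by testing the distributional definition against a sequence of cut-offs $\varphi_{n}\in C^{\infty}(\overline{\Omega})$ with $\varphi_{n}\to 1$ and $\nabla\varphi_{n}$ concentrating on $\partial\Omega$. Combining this with the measure identity from the first step yields \eqref{green}.

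The main obstacle I expect is precisely the promotion of the distributional Leibniz rule to an identity of Radon measures: one must check that the precise representative $v^{*}$ is the correct multiplier of $\operatorname{div}z$ in that decomposition. This relies on the standard fact that, since $z\in\DM^{\infty}$, the measure $\operatorname{div}z$ is absolutely continuous with respect to $\mathcal H^{N-1}$ and therefore does not charge the $\mathcal H^{N-1}$-negligible set on which $v^{*}$ may fail to be defined. Tracking this carefully under the weaker local assumption $v^{*}\in L^{1}_{\rm loc}(\Omega,\operatorname{div}z)$, and passing to the global integrals in \eqref{green} with the integrability in force, is the delicate bookkeeping part of the argument.
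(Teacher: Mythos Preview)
The paper does not supply its own proof of this lemma: it is quoted from \cite{dgs} (see the sentence immediately preceding the statement), so there is no in-paper argument to compare against. Your two-step strategy --- first establish the Leibniz identity $\operatorname{div}(vz)=v^{*}\operatorname{div}z+(z,Dv)$ as Radon measures, then apply the Gauss--Green relation for the single field $vz\in\DM(\Omega)$ --- is exactly the standard route used in the Anzellotti--Chen--Frid framework and is what underlies the cited result.

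One point worth flagging: the lemma as stated in the paper only assumes $v^{*}\in L^{1}_{\rm loc}(\Omega,\operatorname{div}z)$, but the sentence preceding it (which sets the actual context) requires $v\in BV(\Omega)\cap L^{\infty}(\Omega)$ and $v^{*}\in L^{1}(\Omega,\operatorname{div}z)$, which is what guarantees $vz\in\DM(\Omega)$ globally and hence that the trace $[vz,\nu]$ and the integrals in \eqref{green} are well defined. You correctly sensed this tension when you wrote ``under the integrability that makes the integrals in \eqref{green} globally finite''; just be explicit that the global hypotheses from the preceding line are what you are invoking, rather than leaving it implicit.
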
	
Formula \eqref{green} continues to hold if $z\in L^\infty(\Omega)^N$ such that $\operatorname{div}z \in L^N(\Omega)$ and $v\in BV(\Omega)$.

Finally, we also  recall the following technical result due to \cite[Theorem 2.4]{An}. \begin{lemma}\label{lemanzas}
Let $u\in BV(\Omega)$ and let $z\in \DM(\Omega)$ such that $u^*\in L^1(\Omega, \operatorname{div}z)$ then
$$(z, D u)^a = z \cdot D^a u.$$ 
\end{lemma}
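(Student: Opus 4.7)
The goal is to identify the Radon--Nikod\'ym density of $(z, Du)$ with respect to $\mathcal{L}^N$ and show it coincides $\mathcal{L}^N$-a.e.\ with $z \cdot \nabla u$, where $\nabla u$ denotes the approximate gradient of $u$ (so that $D^a u = \nabla u\, \mathcal{L}^N$). A first reduction comes from the estimate recalled just before Lemma~\ref{21}: $|(z, Du)| \le \|z\|_{L^\infty(\Omega)^N} |Du|$ as Radon measures, hence $(z, Du) \ll |Du|$. Decomposing $|Du| = |\nabla u|\mathcal{L}^N + |D^s u|$, this forces $(z, Du)^a = g\, \mathcal{L}^N$ for some Borel $g$ supported on $\{\nabla u \ne 0\}$ with $|g| \le \|z\|_{\infty} |\nabla u|$, and the task reduces to proving $g(x) = z(x) \cdot \nabla u(x)$ at $\mathcal{L}^N$-a.e.\ $x$.

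I would first resolve the smooth case. If $u \in C^1(\Omega) \cap L^\infty(\Omega)$ then $u^* = u$, and mollifying $z$ yields the distributional product rule $\operatorname{div}(uz) = u \operatorname{div} z + z \cdot \nabla u$; substituting into \eqref{dist1} gives
\begin{equation*}
\langle (z, Du), \varphi \rangle = -\int_\Omega u\varphi\,\operatorname{div}z + \langle \operatorname{div}(uz), \varphi\rangle = \int_\Omega (z \cdot \nabla u)\, \varphi,
\end{equation*}
so $(z, Du) = (z\cdot\nabla u)\,\mathcal{L}^N$ identically, and the claim is trivial.

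For the general case I would run a Lebesgue--differentiation/blow-up argument at a typical $x_0 \in \Omega$, chosen so that simultaneously (i) $x_0$ is a Lebesgue point of $z$, (ii) $u$ is approximately differentiable at $x_0$ with gradient $\nabla u(x_0)$, (iii) $|D^s u|(B_r(x_0)) = o(r^N)$, and (iv) $|\operatorname{div}z|(B_r(x_0)) = O(r^N)$. Rescaling $u_r(y) := (u(x_0+ry)-u(x_0))/r$ and $z_r(y) := z(x_0+ry)$ on $B_1$, one has $u_r \to \nabla u(x_0) \cdot y$ in $L^1(B_1)$, $z_r \to z(x_0)$ in $L^1(B_1)^N$, and $\operatorname{div} z_r \to 0$ as Radon measures. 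The smooth case applied to the limiting affine--constant pair produces the measure $z(x_0) \cdot \nabla u(x_0)\,\mathcal{L}^N$, and the natural scaling of the pairing translates into
$$g(x_0) \;=\; \lim_{r \to 0^+}\,\frac{(z, Du)(B_r(x_0))}{|B_r(x_0)|} \;=\; z(x_0)\cdot\nabla u(x_0),$$
which concludes the identification by Lebesgue differentiation.

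The main technical obstacle is the passage to the limit in $(z_r, Du_r)$: the rescaled data converge only in $L^1_{\rm loc}$ while the pairing is defined distributionally through $\operatorname{div} z$. Closing this step rigorously requires either invoking an Anzellotti--Giaquinta refinement of Meyers--Serrin (producing smooth $u_n \to u$ with $\nabla u_n \to \nabla u$ in $L^1_{\rm loc}$) so as to reduce to the smooth identity by dominated convergence, or explicitly controlling the singular contribution on $B_r(x_0)$ using the density--zero property of $|D^s u|$ at $x_0$ together with the $|Du|$--absolute continuity of $(z, Du)$ established in the first step.
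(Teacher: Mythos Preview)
The paper does not supply a proof of this lemma: it is quoted verbatim as ``a technical result due to \cite[Theorem~2.4]{An}'' and used as a black box. So there is no in-paper argument to compare your proposal against.

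That said, your sketch is sound in outline. The reduction via $|(z,Du)|\le\|z\|_{L^\infty}|Du|$ is correct and immediately localizes the problem to $\{\nabla u\neq 0\}$; your computation in the $C^1\cap L^\infty$ case is also correct. The blow-up route you describe is a legitimate modern strategy, and you are right that the only genuine difficulty is the stability of the pairing along the rescalings: weak convergence of $z_r$ and $L^1$ convergence of $u_r$ do not by themselves pass the bilinear quantity $(z_r,Du_r)$ to the limit, since the definition \eqref{dist1} involves $\operatorname{div}z_r$ acting on $u_r$. Your suggested remedy---use an Anzellotti--Giaquinta type intermediate approximation producing smooth $u_n$ with $\nabla u_n\to\nabla u$ strongly in $L^1_{\rm loc}$ and $|Du_n|\stackrel{*}{\rightharpoonup}|Du|$---is in fact precisely Anzellotti's original line of proof in \cite{An}: one checks $(z,Du_n)=z\cdot\nabla u_n\,\mathcal L^N$ by the smooth identity, then passes to the limit using that $(z,Du_n)\stackrel{*}{\rightharpoonup}(z,Du)$ (from the definition and $u_n\to u$ in $L^1$) while $z\cdot\nabla u_n\to z\cdot\nabla u$ in $L^1_{\rm loc}$. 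If you intend to complete the blow-up argument instead, you must justify exactly that convergence $(z_r,Du_r)(B_1)\to z(x_0)\cdot\nabla u(x_0)\,|B_1|$; the cleanest way is to exploit $|D^su|(B_r(x_0))=o(r^N)$ together with the $|Du|$-absolute continuity you already established, which reduces matters to the absolutely continuous part where Lebesgue differentiation applies directly.
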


\medskip 

\section{The prescribed mean curvature case with $f\in L^N(\Omega)$}  \label{sec3}
 In this section we  deal with existence of weak solutions to the following problem:  
\begin{equation}
\label{pb}
\begin{cases}
\dis -\operatorname{div}\left(\frac{D u}{\sqrt{1+|D u|^2}}\right) = f & \text{in}\;\Omega,\\
u=0 & \text{on}\;\partial\Omega,
\end{cases}
\end{equation}
with a datum $f$ belonging  to $L^N(\Omega)$. Though it can be viewed as preparatory  for the general case considered later,  most  of the results of the present section are new both in the form  and in their proofs. Some of the results, those reminiscent  of the classical ones, are  recasted in a up-to-date fashion. We do not assume any sign condition on $f$. 
As we will see, a suitable approximation argument will take us to a $BV$-solution. Let us be  precise in what we mean:

\begin{defin}
	\label{weakdef}
	A function $u\in BV(\Omega)$ is a solution to problem \eqref{pb} if there exists $z\in \mathcal{D}\mathcal{M}^\infty(\Omega)$ with $||z||_{L^\infty(\Omega)^N}\le 1$ such that
	\begin{align}
	&-\operatorname{div}z = f \ \ \text{in}\ \ \mathcal{D}'(\Omega), \label{def_distrp=1}
	\\
	&(z,Du)=\sqrt{1+|Du|^2} - \sqrt{1-|z|^2} \label{def_zp=1} \ \ \ \ \text{as measures in } \Omega,
	\\
	&u(\sgn{u} + [z,\nu])(x)=0 \label{def_bordop=1}\ \ \ \text{for  $\mathcal{H}^{N-1}$-a.e. } x \in \partial\Omega.
	\end{align}
\end{defin}
\begin{remark}\label{remdef}
	Let us spend a few words on Definition \ref{weakdef} and in particular on the request given by \eqref{def_zp=1} that is a  weak way to interpret the ratio between the two measures $D u$ and $\sqrt{1+|D u|^2}$: if $u$ is smooth and $z=\frac{\nabla u}{\sqrt{1+|\nabla u|^2}}$ then one has
	$$
	(z, \nabla u)= z\cdot \nabla u=\frac{|\nabla u|^2}{\sqrt{1+|\nabla u|^2}} \,,
	$$   
	which is exactly  the right-hand of \eqref{def_zp=1}.

Then  it is easy to see that \eqref{def_zp=1} can be equivalently recast by  requiring that both 
	\begin{equation}\label{remdef1}
		z\cdot D^a u = \sqrt{1+ |D^a u|^2} - \sqrt{1-|z|^2}
	\end{equation}
	and
	\begin{equation*} \label{remdef2}
		(z,Du)^s = |D^s u|
	\end{equation*}
	hold. Indeed, using Lemma \ref{lemanzas} and the fact that $f\in  L^N(\Omega)$, one has that  
	$$(z,D u)^a  = z\cdot D^a u.$$ 
	We stress  that, in contrast  with  other cases of flux-limited diffusion operators (e.g. the $1$-laplacian or the transparent media one, \cite{ACM, ABCM, GMP}),  here  the vector field $z$ is uniquely determined by  \eqref{remdef1}, which gives 

$$
z= \frac{D^a u}{\sqrt{1+|D^a u |^2}}. 
$$

 Finally condition \eqref{def_bordop=1} is a nowdays standard way to give meaning to the homogeneous Dirichlet boundary datum. It is well known, in fact,  that $BV$ solutions to problems involving  such type of operators (e.g. the  $1$-laplacian)  do not necessarily assume the boundary datum pointwise.  \eqref{def_bordop=1}  roughly asserts that either $u$ has zero trace or the weak trace of the normal component of $z$ has least possible slope at the boundary.
\end{remark}

Let us state the main result of this section which gives existence of solutions to \eqref{pb} under a smallness condition on the datum $f$.

\begin{theorem}\label{teomain}
	Let $f\in L^N(\Omega)$ such that \begin{equation}\label{smallness}\displaystyle ||f||_{L^N(\Omega)}<\frac{1}{\mathcal{S}_1}\,.\end{equation} Then there exists a bounded solution to problem \eqref{pb}.
\end{theorem}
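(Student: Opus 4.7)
The plan is a regularize-and-pass-to-the-limit scheme. First I would approximate \eqref{pb} by problems amenable to standard Leray--Lions theory --- for instance by adding to the minimal surface operator a small $p$-Laplace-type perturbation with coefficient $1/n$, $p$ large, and an $L^\infty$-truncation $f_n=T_n(f)$ on the right-hand side --- thus obtaining classical solutions $u_n\in W^{1,q}_0(\Omega)$, with associated vector fields $z_n:=\nabla u_n/\sqrt{1+|\nabla u_n|^2}$ automatically bounded by $1$ in $L^\infty(\Omega)^N$.

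Testing the approximate equation with $u_n$ itself and using the smooth pointwise identity $z_n\cdot\nabla u_n=\sqrt{1+|\nabla u_n|^2}-\sqrt{1-|z_n|^2}$ leads to an energy equality of the form
$$\int_\Omega\sqrt{1+|\nabla u_n|^2} - \int_\Omega\sqrt{1-|z_n|^2}\,=\, \int_\Omega f_n u_n \,+\, o(1).$$
Using $\sqrt{1+t^2}\ge t$ on the left-hand side and the Sobolev-type inequality $\|v\|_{L^{N'}(\Omega)}\le \mathcal{S}_1\bigl(\int_\Omega|\nabla v|+\int_{\partial\Omega}|v|\bigr)$ for $W^{1,1}$/$BV$-functions on the right, the smallness assumption \eqref{smallness} lets me absorb the $u_n$-term and deduce a uniform $BV$-bound together with control of $\int_{\partial\Omega}|u_n|$ and of $\int_\Omega\sqrt{1+|\nabla u_n|^2}$. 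A uniform $L^\infty$-bound follows by Stampacchia's method: testing with $G_k(u_n)$ and using the elementary bound $|\nabla u_n|^2/\sqrt{1+|\nabla u_n|^2}\ge |\nabla u_n|-1$ together with Sobolev applied on the superlevel set $\{|u_n|>k\}$ yields a super-linear decay of the level-set measure, and hence $\|u_n\|_{L^\infty(\Omega)}\le M$ independent of $n$.

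By $BV$-compactness and weak-$\ast$ compactness, up to a subsequence, $u_n\to u$ in $L^1(\Omega)$ and a.e.\ with $u\in BV(\Omega)\cap L^\infty(\Omega)$, while $z_n\rightharpoonup z$ weakly-$\ast$ in $L^\infty(\Omega)^N$ with $\|z\|_{L^\infty}\le 1$; passing to the distributional limit yields $-\operatorname{div}z = f$ and hence $z\in\mathcal{DM}^\infty(\Omega)$, proving \eqref{def_distrp=1}. To recover \eqref{def_zp=1} and \eqref{def_bordop=1} I would compare two expressions for $\int_\Omega f u$. One comes from the energy equality above, passed to the limit using the lower semicontinuity of $J_1$ (with $\varphi\equiv 1$) and the weak upper semicontinuity of $J_2$, giving
$$\int_\Omega\sqrt{1+|Du|^2} + \int_{\partial\Omega}|u|\,d\mathcal{H}^{N-1} - \int_\Omega\sqrt{1-|z|^2}\,\le\, \int_\Omega fu.$$
The other comes from Anzellotti's Green formula (Lemma~\ref{21}) applied to the pair $(z,u)$, producing $\int_\Omega fu=\int_\Omega(z,Du)-\int_{\partial\Omega}u\,[z,\nu]\,d\mathcal{H}^{N-1}$. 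The pointwise/measure inequalities $(z,Du)\le\sqrt{1+|Du|^2}-\sqrt{1-|z|^2}$ in $\Omega$ and $-u[z,\nu]\le|u|$ on $\partial\Omega$ (valid since $\|[z,\nu]\|_\infty\le 1$) then force both chains to be equalities, delivering \eqref{def_zp=1} and \eqref{def_bordop=1}.

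The main obstacle is this last step: because only the weak-$\ast$ convergence of $z_n$ and the absence of strong gradient convergence are available, one cannot pass directly to the limit in the smooth identity $z_n\cdot\nabla u_n=\sqrt{1+|\nabla u_n|^2}-\sqrt{1-|z_n|^2}$. The semicontinuity-based detour above works only because one has uniform control of $\int_\Omega\sqrt{1+|\nabla u_n|^2}$ and of the boundary integral, which is exactly what the sharp smallness hypothesis \eqref{smallness} guarantees; its role is not cosmetic, as the examples of Section~\ref{533} will show.
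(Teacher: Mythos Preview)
Your proposal is correct and follows essentially the same route as the paper: a $p$-Laplace-type regularization, uniform $L^\infty$ and $BV$ bounds via Stampacchia and absorption under \eqref{smallness}, weak-$\ast$ extraction of $u$ and $z$, and then the semicontinuity/Green-formula comparison to force \eqref{def_zp=1} and \eqref{def_bordop=1}. The only noteworthy differences are organizational---the paper takes $p\to 1^+$ with coefficient $(p-1)$ rather than a fixed $p$ with coefficient $1/n$, and it establishes \eqref{def_zp=1} locally (testing with $u_p\varphi$, $\varphi\in C^1_c$) before turning to the boundary, whereas you do both in one global stroke; also, your displayed ``$+\,o(1)$'' is not quite right (the perturbation term $\tfrac{1}{n}\int_\Omega|\nabla u_n|^p$ is only known to be bounded, not to vanish), but since it is nonnegative you only need the inequality $\le$, which is exactly what you use afterwards.
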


\begin{remark}
As we already said, assumption \eqref{smallness} is in  some sense necessary in order to get a solution, we refer to  Remark \ref{giusti} below for further comments on that and also to Example \ref{53} in Section \ref{533}.  

We also want to stress that a general uniqueness result, in contrast with the regular case (as in \cite{gt}), is not expected in this generality.  In fact, in \cite[Theorem 9.1]{lc},  the authors prove  a uniqueness result in the class of  continuous functions providing an example that highlights  a non-uniqueness phenomenon for solutions with non-empty jump part. 
\end{remark}

The proof of  Theorem \ref{teomain} will be obtained through  approximation with the $p$-growth ($p>1$)  problems
\begin{equation}
	\label{pbp}
	\begin{cases}
		\dis \dis -\operatorname{div}\left(\frac{\nabla u_p}{\sqrt{1+|\nabla  u_p|^2}}\right) - (p-1)\operatorname{div}\left(|\nabla u_p|^{p-2}\nabla u_p\right)  = f & \text{in}\;\Omega,\\
		u_p=0 & \text{on}\;\partial\Omega,
	\end{cases}
\end{equation}
a solution of \eqref{pbp} being a function $u_p\in W^{1,p}_0(\Omega)$ such that 
\begin{equation}\label{pbpw}
\into \frac{\nabla u_p}{\sqrt{1+|\nabla  u_p|^2}}\cdot \nabla v + (p-1)\into |\nabla u_p|^{p-2}\nabla u_p\cdot \nabla v = \into fv\,,
\end{equation}
for any $v\in W^{1,p}_0(\Omega)$.

The existence  of a solution  $u_p \in W^{1,p}_0(\Omega)\cap L^\infty(\Omega)$  of \eqref{pbp} follows by  standard monotonicity arguments (\cite{ll}).

\medskip

We accomplish the proof of  Theorem \ref{teomain} by splitting it into a few  steps. We start showing some estimates on $u_p$ which are independent of $p\sim 1^+$. In fact,  we  recall that the aim is taking $p\to 1^+$ in \eqref{pbp} so that,  in the following, estimates independent on $p$ are tacitly meant as  {\it  there exists some $p_0>1$ such that the estimate is uniform in the range $1<p\leq p_0$}.

\medskip

The main needed estimates are collected in the following: 
\begin{lemma}\label{lemma_stime}
Let $f\in L^N(\Omega)$ such that \eqref{smallness} holds and let $u_p$ be a solution to \eqref{pbp}. Then $u_p$ is bounded in $BV(\Omega)\cap L^\infty(\Omega)$ (with respect to $p$) and there exists $u\in BV(\Omega)\cap L^\infty(\Omega)$ such that, up to subsequences, $u_p$ converges to $u$ in $L^q(\Omega)$ for every $q<\infty$, weak$^*$ in $L^\infty(\Omega)$, and  $\nabla u_p$ converges to $D u$ weak$^*$ as measures as $p\to 1^+$. Moreover it holds
\begin{equation}\label{stimatermineenergia}
	(p-1)\int_\Omega |\nabla u_p|^p\le C,
\end{equation}
for some constant $C$ independent of $p$.
\end{lemma}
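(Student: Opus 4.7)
The plan is to first extract a uniform $L^\infty$ bound on $u_p$ by a Stampacchia truncation argument that crucially relies on the smallness assumption \eqref{smallness}, then to deduce the $BV$ bound and \eqref{stimatermineenergia} by testing the equation against $u_p$ itself, and finally to collect the compactness properties from standard $BV$ embeddings. Throughout, the key analytic tool is the elementary pointwise inequality
\begin{equation*}
\frac{|\xi|^{2}}{\sqrt{1+|\xi|^{2}}}=\sqrt{1+|\xi|^{2}}-\frac{1}{\sqrt{1+|\xi|^{2}}}\geq |\xi|-1, \qquad \xi\in\mathbb{R}^{N},
\end{equation*}
which lets the mean curvature operator recover a full $|\nabla u_{p}|$ after paying a bounded cost.

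\medskip

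\emph{Step 1: Uniform $L^{\infty}$ estimate.} For $k\geq 0$ take $G_{k}(u_{p})\in W^{1,p}_{0}(\Omega)$ as test function in \eqref{pbpw}. Since $\nabla G_{k}(u_{p})=\nabla u_{p}\,\chi_{A_{k}}$ with $A_{k}=\{|u_{p}|>k\}$, and the $p$-Laplacian contribution is nonnegative, the pointwise inequality above yields
\begin{equation*}
\int_{\Omega}|\nabla G_{k}(u_{p})|\leq |A_{k}|+\int_{\Omega} f\, G_{k}(u_{p}).
\end{equation*}
Applying H\"older and the Sobolev inequality \eqref{sob} with $p=1$ (noting $G_{k}(u_{p})\in W^{1,1}_{0}(\Omega)$), and setting $\theta:=\mathcal{S}_{1}\|f\|_{L^{N}(\Omega)}<1$ thanks to \eqref{smallness}, I absorb the Sobolev norm on the right to obtain
\begin{equation*}
\|G_{k}(u_{p})\|_{L^{N/(N-1)}(\Omega)}\leq \frac{\mathcal{S}_{1}}{1-\theta}\,|A_{k}|.
\end{equation*}
Combining this with the Cavalieri-type bound $(h-k)|A_{h}|\leq |A_{k}|^{1/N}\|G_{k}(u_{p})\|_{L^{N/(N-1)}(\Omega)}$ for $0\leq k<h$, Stampacchia's lemma yields $|\{|u_{p}|>M\}|=0$ for some $M=M(\|f\|_{L^{N}(\Omega)},|\Omega|,N)$ independent of $p$.

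\medskip

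\emph{Step 2: $BV$ bound and estimate \eqref{stimatermineenergia}.} Testing \eqref{pbpw} with $u_{p}$ itself and using again the same pointwise inequality gives
\begin{equation*}
\int_{\Omega}|\nabla u_{p}|-|\Omega|+(p-1)\int_{\Omega}|\nabla u_{p}|^{p}\leq \int_{\Omega}f\,u_{p}\leq \|f\|_{L^{1}(\Omega)}\,\|u_{p}\|_{L^{\infty}(\Omega)}\leq C,
\end{equation*}
with $C$ independent of $p$ because of Step 1. Since both terms on the left are nonnegative, this yields simultaneously a uniform $BV$ bound on $u_{p}$ and the estimate \eqref{stimatermineenergia}.

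\medskip

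\emph{Step 3: Compactness and identification of the limit.} Uniform boundedness in $BV(\Omega)\cap L^{\infty}(\Omega)$ and the compact embedding $BV(\Omega)\hookrightarrow\hookrightarrow L^{1}(\Omega)$ provide a subsequence with $u_{p}\to u$ in $L^{1}(\Omega)$ and $u\in BV(\Omega)\cap L^{\infty}(\Omega)$ by lower semicontinuity. Interpolating with the uniform $L^{\infty}$ bound upgrades this to strong convergence in every $L^{q}(\Omega)$ with $q<\infty$ and to weak-$*$ convergence in $L^{\infty}(\Omega)$. Finally, the measures $\nabla u_{p}$ are uniformly bounded in $\mathcal{M}(\Omega)^{N}$, hence a subsequence converges weak-$*$ to some $\mathbb{R}^{N}$-valued measure; distributional convergence forced by $u_{p}\to u$ in $L^{1}(\Omega)$ identifies the limit as $Du$.

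\medskip

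The main obstacle lies in Step 1: the mean curvature operator yields only linear, not $p$-growth, control on $\nabla u_{p}$, so no direct Moser iteration is available and one must recover the missing coercivity through the inequality $|\xi|^{2}/\sqrt{1+|\xi|^{2}}\geq |\xi|-1$. It is precisely at this point that \eqref{smallness} enters through $\theta<1$ to allow the absorption step; without this smallness the Stampacchia loop would not close, which is consistent with the sharpness discussed after the statement and illustrated in Example \ref{53}.
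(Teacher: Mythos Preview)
Your proof is correct and follows essentially the same route as the paper: the Stampacchia truncation argument for the $L^\infty$ bound via the pointwise inequality $|\xi|^2/\sqrt{1+|\xi|^2}\ge|\xi|-1$ and the absorption step enabled by \eqref{smallness}, then testing with $u_p$ for the $BV$ bound and \eqref{stimatermineenergia}, and finally standard $BV$ compactness. The only cosmetic difference is that you pass directly to the level-set inequality $(h-k)|A_h|\le |A_k|^{1/N}\|G_k(u_p)\|_{L^{N/(N-1)}}$ whereas the paper goes through $\int_\Omega|G_k(u_p)|$, but this is the same computation.
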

\begin{proof}
	Let us show that $u_p$ are uniformly  bounded in $L^\infty(\Omega)$. We take $G_k(u_p)$ ($k>0$) as a test function in \eqref{pbpw} and we use H\"older's inequality,  yielding to
	\begin{equation}\begin{aligned}\label{stimeapriori4}
		\int_\Omega \frac{|\nabla G_k(u_p)|^2}{\sqrt{1+|\nabla G_k(u_p)|^2}} & \le  \int_\Omega f G_k(u_p) 
		\le ||f||_{L^N(\Omega)} ||G_k(u_p)||_{L^{\frac{N}{N-1}}(\Omega)} \\  &\stackrel{\eqref{sob}}{\le} ||f||_{L^N(\Omega)} \mathcal{S}_1 \int_\Omega |\nabla G_k(u_p)|,
	\end{aligned}\end{equation}
 after getting rid of the nonnegative second term.
 
Now let us focus on the  first term of \eqref{stimeapriori4}; one has
	\begin{equation}\label{stimeapriori5}
		\int_\Omega \frac{|\nabla G_k(u_p)|^2}{\sqrt{1+|\nabla G_k(u_p)|^2}}  = \int_{A_k} {\sqrt{1+|\nabla G_k(u_p)|^2}}  - \int_{A_k} \frac{1}{\sqrt{1+|\nabla G_k(u_p)|^2}}  \ge \int_\Omega |\nabla G_k(u_p)| - |A_k|,
	\end{equation}
	where $A_k:=\{x\in\Omega: |u_p(x)| > k\}$. Then, using \eqref{stimeapriori5} in \eqref{stimeapriori4} and thanks to \eqref{smallness}, one gets
	\begin{equation*}\label{stimeapriori6}
		\int_\Omega |\nabla G_k(u_p)| \le \frac{|A_k|}{1-||f||_{L^N(\Omega)} \mathcal{S}_1}.
	\end{equation*}
	The Sobolev and  the H\"older inequalities together with  the previous imply that
	\begin{equation*}
		\int_\Omega |G_k(u_p)| \le \frac{|A_k|^{1+\frac{1}{N}}\mathcal{S}_1}{1-||f||_{L^N(\Omega)} \mathcal{S}_1}.
	\end{equation*}
	In particular,  for any $h>k>0$, one has 
	\begin{equation}\label{stimeapriori7}
	|A_h| \le \frac{|A_k|^{1+\frac{1}{N}}\mathcal{S}_1}{(h-k)(1-\mathcal{S}_1 ||f||_{L^N(\Omega)}) }
	\end{equation}	
	which allows to apply the classical Stampacchia argument (see \cite{Stampacchia}) in order to deduce that $||u_p||_{L^\infty(\Omega)}\le M$ where, we stress it,  $M>0$ does not depend on $p$ as the right-hand of \eqref{stimeapriori7} does not.	
		
	\medskip

	Now we turn  on proving that $u_p$ is bounded in $BV(\Omega)$. We plug  $u_p$ as  test in  \eqref{pbpw}, yielding to 
	\begin{equation}\label{stimeapriori1}
		\int_\Omega \frac{|\nabla u_p|^2}{\sqrt{1+|\nabla u_p|^2}} + (p-1)\int_\Omega |\nabla u_p|^p = \int_\Omega f u_p \le M\int_\Omega f\,.
	\end{equation}
	
	For the left-hand of \eqref{stimeapriori1} we reason as for the first part of the proof
	\begin{equation}\label{stimeapriori1bis}
	\int_\Omega \frac{|\nabla u_p|^2}{\sqrt{1+|\nabla u_p|^2}}  = \int_\Omega {\sqrt{1+|\nabla u_p|^2}}  - \int_\Omega \frac{1}{\sqrt{1+|\nabla u_p|^2}}  \ge \int_\Omega |\nabla u_p| - |\Omega|.
	\end{equation}	
	Thus, collecting \eqref{stimeapriori1} and \eqref{stimeapriori1bis} and applying the Young inequality one gets 
\begin{equation*}\label{stimeapriori2}
\int_{\Omega} |\nabla u_p| + (p-1)\int_\Omega |\nabla u_p|^p \le M\int_\Omega f + |\Omega|,
\end{equation*}
which gives \eqref{stimatermineenergia} and also implies the boundedness in $BV(\Omega)$ of $u_p$. 

\medskip 
The $BV$ estimate  (joint with the $L^\infty$ one)  for $u_p$ allows to apply standard compactness arguments; so there exists a function $u$ such that, up to subsequences, $u_p$ converges to $u$ in $L^q(\Omega)$ for every $q<\infty$, weak$^*$ in $L^\infty(\Omega)$,  and such that $\nabla u_p$ converges to $D u$  weak$^*$ as measures as $p\to 1^+$. This concludes the proof.
\end{proof}

 Next lemma concerns the identification and the role of the vector field $z$.

\begin{lemma}\label{lemma_esistenzaz}
	Under the assumptions of Lemma \ref{lemma_stime} there exists $z\in \DM(\Omega)$ such that 
	\begin{equation}\label{esistenzaz1}
	-\operatorname{div}z= f  \ \ \ \text{in  } \mathcal{D'}(\Omega),
	\end{equation}
	and
	\begin{equation}\label{esistenzaz2}
		(z,Du)=\sqrt{1+|Du|^2} - \sqrt{1-|z|^2} \ \ \text{as measures in } \Omega,
	\end{equation}
	where $u$ is the function found in Lemma \ref{lemma_stime}.
\end{lemma}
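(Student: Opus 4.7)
The plan is to extract $z$ as the weak-$\ast$ $L^\infty$-limit of the area-type part of the approximating fluxes, pass to the limit in the divergence equation to obtain \eqref{esistenzaz1}, and then recover the pairing identity \eqref{esistenzaz2} from an energy identity coupled with the Fenchel-Young inequality for $\xi\mapsto\sqrt{1+|\xi|^2}$.

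First I decompose the flux in \eqref{pbpw} as $w_p := z_p + y_p$, with
$$
z_p := \frac{\nabla u_p}{\sqrt{1+|\nabla u_p|^2}}, \qquad y_p:= (p-1)|\nabla u_p|^{p-2}\nabla u_p,
$$
so that $|z_p|\le 1$ a.e. For any fixed $r\in[1,\infty)$, the $BV$-bound of Lemma \ref{lemma_stime} and Jensen's inequality give
$$
\int_\Omega |y_p|^r \le (p-1)^r |\Omega|^{1-(p-1)r}\!\left(\int_\Omega|\nabla u_p|\right)^{(p-1)r}\!\!\xrightarrow[p\to 1^+]{}\ 0.
$$
Thus, up to subsequences, $z_p\rightharpoonup^\ast z$ in $L^\infty(\Omega)^N$ with $\|z\|_{L^\infty(\Omega)^N}\le 1$, and $y_p\to 0$ strongly in $L^r(\Omega)^N$. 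Passing to the limit in the distributional equation $-\operatorname{div}(z_p+y_p)=f$ gives \eqref{esistenzaz1}; since $f\in L^N(\Omega)$, this places $z$ in $\DM(\Omega)$.

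To get \eqref{esistenzaz2}, I test \eqref{pbpw} with $u_p$ and use $\frac{|\nabla u_p|^2}{\sqrt{1+|\nabla u_p|^2}} = \sqrt{1+|\nabla u_p|^2} - \sqrt{1-|z_p|^2}$: after dropping the nonnegative term $(p-1)\int|\nabla u_p|^p$,
$$
\int_\Omega \sqrt{1+|\nabla u_p|^2} \le \int_\Omega f u_p + \int_\Omega \sqrt{1-|z_p|^2}.
$$
The trace of $u_p$ on $\partial\Omega$ vanishes, so the lower semicontinuity of $J_1$ on the left, together with $\int f u_p \to \int f u$ (from $u_p\to u$ in $L^{N/(N-1)}$ and $f\in L^N$) and the weak upper semicontinuity of $J_2$ applied to $z_p \rightharpoonup^\ast z$ (legitimate since $|z_p|\le 1$ renders the weak-$\ast$ $L^\infty$-convergence weak in $L^1$) on the right, produces
$$
\int_\Omega \sqrt{1+|Du|^2} + \int_{\partial\Omega}|u|\,d\mathcal{H}^{N-1} \le \int_\Omega fu + \int_\Omega \sqrt{1-|z|^2}.
$$
Applying Lemma \ref{21} to $z$ and $u$, using $-\operatorname{div}z=f$ and \eqref{des2}, one has $\int fu = \int_\Omega (z,Du) - \int_{\partial\Omega} u[z,\nu]\,d\mathcal H^{N-1}$; since $|[z,\nu]|\le 1$ on $\partial\Omega$ forces $-u[z,\nu] - |u|\le 0$, the boundary terms can be discarded, leaving
$$
\int_\Omega \sqrt{1+|Du|^2} \le \int_\Omega (z,Du) + \int_\Omega \sqrt{1-|z|^2}.
$$

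The reverse measure inequality is supplied by the Fenchel-Young inequality: the Legendre conjugate of $\xi\mapsto\sqrt{1+|\xi|^2}$ being $-\sqrt{1-|z|^2}$ on $\{|z|\le 1\}$, one has pointwise $\sqrt{1+|D^au|^2}\ge z\cdot D^au + \sqrt{1-|z|^2}$. Combining this with $(z,Du)^a = z\cdot D^au$ (Lemma \ref{lemanzas}) and the singular-part bound $(z,Du)^s\le |D^su|$ coming from $\|z\|_{L^\infty(\Omega)^N}\le 1$ yields
$$
\sqrt{1+|Du|^2} \ge (z,Du) + \sqrt{1-|z|^2} \qquad \text{as measures in } \Omega.
$$
Integrating provides the reverse of the preceding integral inequality, so the total masses of the two sides coincide; since the measure inequality already holds pointwise, the equality of total masses forces equality as measures, which is exactly \eqref{esistenzaz2}. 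I expect the main obstacle to lie in the correct bookkeeping of the boundary contribution during the passage to the limit (balancing the nonattainment of the trace of $u$ against the weak trace $[z,\nu]$) and in the final step of promoting an integral inequality to a measure identity, which relies crucially on the fact that the reverse measure inequality is available a priori from Fenchel duality.
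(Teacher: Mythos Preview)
Your proof is correct, but it follows a genuinely different route from the paper's. The paper obtains the key measure inequality \emph{locally}: it tests \eqref{pbpw} with $u_p\varphi$ for an arbitrary nonnegative $\varphi\in C^1_c(\Omega)$, passes to the limit term by term, and arrives directly at
\[
\int_\Omega \sqrt{1+|Du|^2}\,\varphi - \int_\Omega \sqrt{1-|z|^2}\,\varphi \le \int_\Omega (z,Du)\,\varphi \quad \text{for all } 0\le\varphi\in C^1_c(\Omega),
\]
which immediately gives the measure inequality $\sqrt{1+|Du|^2}-\sqrt{1-|z|^2}\le (z,Du)$; the reverse is then the algebraic Fenchel bound, exactly as you use it. You instead test only with $u_p$ (no localization), which forces you to carry the boundary contributions through the limit via $J_1$ and the Green formula, and produces merely the \emph{global} integral inequality $\int_\Omega\sqrt{1+|Du|^2}\le \int_\Omega(z,Du)+\int_\Omega\sqrt{1-|z|^2}$. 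You then recover the full measure identity by observing that the a~priori Fenchel inequality goes the other way as measures, so the nonnegative measure $\sqrt{1+|Du|^2}-(z,Du)-\sqrt{1-|z|^2}\mathcal L^N$ has zero total mass and must vanish. This ``global inequality plus pointwise reverse'' argument is clean and avoids the extra cross-terms in $\nabla\varphi$ that the paper has to estimate; the price is that you must handle the boundary terms already here (discarding them via $|[z,\nu]|\le 1$), whereas the paper keeps the interior and boundary analyses cleanly separated between Lemmas \ref{lemma_esistenzaz} and \ref{lemma_datoalbordo}. Your treatment of the vanishing of $y_p=(p-1)|\nabla u_p|^{p-2}\nabla u_p$ is also different and slightly more elementary: you use only the uniform $BV$-bound and Jensen's inequality, while the paper relies on the energy estimate \eqref{stimatermineenergia} through a H\"older argument.
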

\begin{proof}
Since ${|\nabla u_p|}{({1+|\nabla  u_p|^2})^{-\frac12}}\le 1$ there exists a bounded vector field $z$ such that ${\nabla u_p}{({1+|\nabla  u_p|^2})^{-\frac12}}$ converges to $z$ weak$^*$ in $L^\infty(\Omega)^N$ as $p\to 1^+$. Moreover by weak lower semicontinuity of the norm, one gets that $||z||_{L^\infty(\Omega)^N} \le 1$.

Now let us take $\varphi \in C^1_c(\Omega)$ as a test function in \eqref{pbpw} and let us  take $p\to 1^+$, one has
\begin{equation}\label{phitest}
\into z\cdot \nabla \varphi +\lim_{p\to1^+} (p-1)\into |\nabla u_p|^{p-2}\nabla u_p\cdot \nabla \varphi = \into f\varphi\,.
\end{equation}

	Let us now observe that
	\begin{equation}\label{termineenergiazero}
	\begin{aligned}
		(p-1)\left|\int_\Omega |\nabla u_p|^{p-2}\nabla u_p\cdot\nabla \varphi \right| &\le (p-1) \left(\int_\Omega |\nabla u_p|^p\right)^{\frac{p-1}{p}} \left(\int_\Omega |\nabla \varphi|^p\right)^{\frac{1}{p}} 
		\\
		&\le (p-1)^{\frac{1}{p}} ||\nabla \varphi||_{L^\infty(\Omega)^N}|\Omega|^{\frac{1}{p}} \left((p-1)\int_\Omega |\nabla u_p|^p\right)^{\frac{p-1}{p}}
		\\
		&\stackrel{\eqref{stimatermineenergia}}{\le}(p-1)^{\frac{1}{p}} ||\nabla \varphi||_{L^\infty(\Omega)^N}|\Omega|^{\frac{1}{p}} C^{\frac{p-1}{p}},
	\end{aligned}
\end{equation}
which gives that the second term in \eqref{phitest} vanishes. This implies  that  \eqref{esistenzaz1} holds and that $z\in \DM(\Omega)$.

\medskip

	It is worth mentioning for later purposes that, since $u\in L^\infty(\Omega)$ and $f\in L^N(\Omega)$, one can easily check that 
		\begin{equation}\label{moltperu}
	-u\operatorname{div}z =  f u  \ \ \text{  in  } \mathcal{D}'(\Omega).
\end{equation}
\medskip
	Now, recalling Remark \ref{remdef}, in order to prove \eqref{esistenzaz2}, it suffices to  show both
		\begin{equation}\label{zducontinua}
		z\cdot D^a u = \sqrt{1+ |D^a u|^2} + \sqrt{1-|z|^2}
	\end{equation}
	and
	\begin{equation} \label{zdusingolare}
		(z,Du)^s = |D^s u|.
	\end{equation}

\medskip
	
We take $v=u_p\varphi$ in  \eqref{pbpw}  where $\varphi \in C^1_c(\Omega)$ is a nonnegative function, yielding to

\begin{equation}\begin{aligned}\label{lemmaz1}	
&\int_{\Omega} \frac{|\nabla u_p|^{2}\varphi}{\sqrt{1 +|\nabla u_p|^2}} + \int_{\Omega} \frac{\nabla u_p \cdot \nabla \varphi u_p}{\sqrt{1 +|\nabla u_p|^2}} + (p-1)\int_\Omega |\nabla u_p|^p\varphi
\\
& + (p-1)\int_\Omega |\nabla u_p|^{p-2}\nabla u_p\cdot \nabla \varphi u_p = \int_{\Omega}  f u_p \varphi.	
\end{aligned}
\end{equation}

We can write the first term on the left-hand of the previous as
\begin{equation}
	\label{firstterm}
	\int_{\Omega} \frac{|\nabla u_p|^{2}\varphi}{\sqrt{1 +|\nabla u_p|^2}} = \int_{\Omega} \sqrt{1 +|\nabla u_p|^2}\varphi - \int_{\Omega} \sqrt{1-\frac{|\nabla u_p|^{2}}{1 +|\nabla u_p|^2}}\varphi,
\end{equation}
and we drop  the nonnegative third term in \eqref{lemmaz1}. This takes to
\begin{equation}\begin{aligned}\label{lemmaz2}	
	&\int_{\Omega} \sqrt{1 +|\nabla u_p|^2}\varphi - \int_{\Omega} \sqrt{1-\frac{|\nabla u_p|^{2}}{1 +|\nabla u_p|^2}}\varphi + \int_{\Omega} \frac{\nabla u_p \cdot \nabla \varphi u_p}{\sqrt{1 +|\nabla u_p|^2}} 
	\\
	&+  (p-1)\int_\Omega |\nabla u_p|^{p-2}\nabla u_p\cdot \nabla \varphi u_p \le \int_{\Omega}  f u_p \varphi.	
\end{aligned}\end{equation}
Now observe that, as $p\to 1^+$, the first term is lower semicontinuous with respect to the $L^1$ convergence. As we already mentioned, using  Corollary $3.9$ of \cite{brezis}, one can  deduce that the second term of \eqref{lemmaz2}	 is weakly lower semicontinuous with respect to the $L^1$ convergence (recall that   $\nabla u_p(1 +|\nabla u_p|^2)^{-\frac{1}{2}}$ converges to $z$ weak$^*$ in $L^\infty(\Omega)^N$). 
Moreover, the third term on the left-hand of \eqref{lemmaz2}  passes to the limit by  the weak$^*$ convergence of $\nabla u_p(1 +|\nabla u_p|^2)^{-\frac{1}{2}}$ to $z$ in $L^\infty(\Omega)^N$ together with the strong convergence of $u_p$ in $L^q(\Omega)$ for any $q<\infty$ as $p\to 1^+$. 
The convergence of $u_p$ in $L^q(\Omega)$ for any $q<\infty$ also allows to pass to the limit the term on the right-hand of \eqref{lemmaz2}.
\\
It remains to estimate  the fourth term on the left-hand side; indeed, as $u_p$ is bounded in $L^\infty(\Omega)$
	\begin{equation*}\label{termineenergiazero2}
	\begin{aligned}
		(p-1)\left|\int_\Omega |\nabla u_p|^{p-2}\nabla u_p\cdot\nabla \varphi u_p \right| \le ||u_p||_{L^\infty(\Omega)}(p-1)\left|\int_\Omega |\nabla u_p|^{p-2}\nabla u_p\cdot\nabla \varphi \right| 
		\end{aligned}
\end{equation*}
and the right-hand of the previous goes to zero as $p\to1^+$ as in  \eqref{termineenergiazero}. 

Therefore, we have  that  
\begin{equation*}	
\int_{\Omega} \sqrt{1 +|D u|^2}\varphi - \int_\Omega \sqrt{1-|z|^2}\varphi \le - \int_{\Omega} uz\cdot \nabla \varphi + \int_{\Omega}  f u \varphi  \overset{\eqref{moltperu}}{=} -\int_{\Omega} uz\cdot \nabla \varphi-\int_\Omega u\operatorname{div}z \varphi,	
\end{equation*}
and by \eqref{dist1}, one has
\begin{equation}\label{zdu0}	
\int_{\Omega} \sqrt{1 +|D u|^2}\varphi - \int_\Omega \sqrt{1-|z|^2}\varphi \le \int_\Omega (z, Du)\varphi,	\ \ \forall  \varphi \in C^1_c(\Omega), \ \varphi \ge 0.
\end{equation}
Since $\operatorname{div}z \in L^N(\Omega)$ and thanks to Lemma \ref{lemanzas}, inequality \eqref{zdu0} implies that almost everywhere in $\Omega$ 
\begin{equation*}\label{zdu1}
	z\cdot D^a u \ge \sqrt{1 +|D^a u|^2} - \sqrt{1-|z|^2}.
\end{equation*}
The reverse inequality is purely algebraic; indeed, for any $\xi \in \re^N$ it holds
$$z\cdot \xi +\sqrt{1-|z|^2}\le \sqrt{1+|\xi|^2}.$$

This proves the validity of \eqref{zducontinua}. Concerning  \eqref{zdusingolare}, 
since $||z||_{L^\infty(\Omega)^N}\le 1$ then
$$(z,Du)^s \le |D u|^s= |D^s u|,$$
as measures in $\Omega$.
The reverse inequality simply follows from \eqref{zdu0} by restricting on the singular parts of the measures. This concludes the proof.
\end{proof}

Let us now show that the boundary condition \eqref{def_bordop=1} holds.

\begin{lemma}\label{lemma_datoalbordo}
	Under the assumptions of Lemma \ref{lemma_stime} it holds
	$$u(\sgn{u} + [z,\nu])(x)=0 \ \text{for  $\mathcal{H}^{N-1}$-a.e. } x \in \partial\Omega,$$
	where $u$ and $z$ are, resp.,  the function and the vector field found, resp., in lemmata \ref{lemma_stime} and \ref{lemma_esistenzaz}.	
\end{lemma}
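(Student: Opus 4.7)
The plan is to combine two pieces of information about the bulk behavior of the limit: an inequality obtained by passing to the limit in the equation tested against $u_p$ together with the lower semicontinuity of the area functional $J_1$ (which encodes the boundary term $\int_{\partial\Omega}|u|\,d\mathcal H^{N-1}$), and a corresponding equality obtained by applying the Anzellotti--Chen--Frid Green formula to the limit vector field $z$. Subtracting the two yields a boundary integral of definite sign.

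First, I would take $u_p$ itself as test function in \eqref{pbpw}, rewrite the first term as in \eqref{firstterm}, and trivially add $\int_{\partial\Omega}|u_p|\,d\mathcal H^{N-1}=0$ to obtain
\begin{equation*}
\int_\Omega \sqrt{1+|\nabla u_p|^2} + \int_{\partial\Omega}|u_p|\,d\mathcal H^{N-1} - \int_\Omega \sqrt{1-\tfrac{|\nabla u_p|^2}{1+|\nabla u_p|^2}} + (p-1)\int_\Omega |\nabla u_p|^p = \int_\Omega f\, u_p.
\end{equation*}
Dropping the nonnegative energy term and letting $p\to 1^+$, the first summand is lower semicontinuous with respect to $L^1$ convergence (by the semicontinuity of $J_1$), the second summand is weakly upper semicontinuous (by the cited Corollary~3.9 of \cite{brezis} applied to $\nabla u_p(1+|\nabla u_p|^2)^{-1/2}\overset{\ast}{\rightharpoonup}z$), and the right-hand side passes to the limit by strong $L^q$ convergence of $u_p$ to $u$. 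This yields
\begin{equation*}
\int_\Omega \sqrt{1+|Du|^2} + \int_{\partial\Omega}|u|\,d\mathcal H^{N-1} - \int_\Omega \sqrt{1-|z|^2} \le \int_\Omega f\, u.
\end{equation*}

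Second, I would apply the Green formula \eqref{green} to $z$ and $u$: since $\operatorname{div}z=-f\in L^N(\Omega)$ one can replace $u^*$ by $u$ and obtains
\begin{equation*}
\int_\Omega (z,Du) = \int_\Omega f\,u + \int_{\partial\Omega}[uz,\nu]\,d\mathcal H^{N-1} = \int_\Omega f\,u + \int_{\partial\Omega} u\,[z,\nu]\,d\mathcal H^{N-1},
\end{equation*}
where I used \eqref{des2}. Invoking \eqref{esistenzaz2} to rewrite $\int_\Omega (z,Du)= \int_\Omega \sqrt{1+|Du|^2}-\int_\Omega \sqrt{1-|z|^2}$ and subtracting this equality from the inequality above gives
\begin{equation*}
\int_{\partial\Omega}\bigl(|u| + u\,[z,\nu]\bigr)\,d\mathcal H^{N-1} \le 0.
\end{equation*}

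Since $\|[z,\nu]\|_{L^\infty(\partial\Omega)}\le \|z\|_{L^\infty(\Omega)^N}\le 1$, the integrand is pointwise nonnegative, hence identically zero $\mathcal H^{N-1}$-a.e. on $\partial\Omega$; this is precisely $u(\sgn u + [z,\nu])=0$ and concludes the proof. The only slightly delicate point is the interplay of semicontinuities in the first step (checking that the upper semicontinuous term $\int\sqrt{1-|z|^2}$ enters with the correct sign so that the resulting inequality matches the Green formula equality), and ensuring that the vanishing of the $(p-1)$-energy contributions from the test $u_p$ is inherited from the uniform bound \eqref{stimatermineenergia}; both were already handled in the proof of Lemma~\ref{lemma_esistenzaz}, so the argument is essentially a clean assembly of previously established facts.
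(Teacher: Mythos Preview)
Your proof is correct and follows essentially the same route as the paper: test \eqref{pbpw} with $u_p$, exploit the zero trace to insert the boundary term, pass to the limit via the semicontinuity of $J_1$ and $J_2$, and then compare with the Green formula \eqref{green} combined with \eqref{esistenzaz2} and \eqref{des2} to force the nonnegative boundary integrand $|u|+u[z,\nu]$ to vanish. The only cosmetic difference is that the paper chains the inequalities in one string rather than presenting the Green identity as a separate equality to be subtracted.
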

\begin{proof}
	One tests \eqref{pbpw}  with  $u_p$  and, recalling that $u_p$ has zero Sobolev trace on $\partial\Omega$, one gets 
	\begin{equation*}
	\int_{\Omega} \frac{|\nabla u_p|^2}{\sqrt{1+|\nabla u_p|^2}} + \int_{\partial \Omega} |u_p| d\mathcal{H}^{N-1}  \le  \int_{\Omega}  fu_p.	
	\end{equation*}		
	By elementary manipulations  as  done in \eqref {firstterm} (with $\varphi =1$) one has 
			\begin{equation*}\label{boundary0}
		\int_{\Omega} \sqrt{1+|\nabla u_p|^2} - \int_{\Omega} \sqrt{1-\frac{|\nabla u_p|^2}{1+|\nabla u_p|^2}} + \int_{\partial \Omega}|u_p| d\mathcal{H}^{N-1} \le \int_{\Omega}  f u_p.	
	\end{equation*}
	Similarly to what we have done in Lemma \ref{lemma_esistenzaz}, one can take the liminf for the left-hand of the previous and use the {weak lower and upper semicontinuity} in order to get
	\begin{equation*}\label{boundary1}
	\int_{\Omega} \sqrt{1+|D u|^2} - \int_{\Omega} \sqrt{1-|z|^2} + \int_{\partial \Omega}|u| d\mathcal{H}^{N-1} \le  \int_{\Omega}  f u.	
	\end{equation*}
	Now recall \eqref{moltperu}, i.e.   $$-u\operatorname{div}z=fu\ \ \ \text{in}\ \  \Omega.$$ 
	Finally observe	\begin{equation*}\label{bordo1}
	\begin{aligned}
	\int_{\Omega} \sqrt{1+|Du|^2} - \int_{\Omega} \sqrt{1-|z|^2} + \int_{\partial \Omega}|u| d\mathcal{H}^{N-1}  & \stackrel{\eqref{moltperu}}{\le}  -\int_\Omega u\operatorname{div}z 
	\\
	&\stackrel{\eqref{green}}{=} \int_{\Omega}(z,Du) - \int_{\partial \Omega} u[ z,\nu]d\mathcal{H}^{N-1},
	\end{aligned}	
	\end{equation*}
	where in the last step we also used \eqref{des2}.  
This concludes   the proof  as   \eqref{esistenzaz2} is in force and recalling that $|[z,\nu]|\leq 1$.
\end{proof}
As we said the  proof of Theorem \ref{teomain} simply follows by gathering together the previous lemmata.
\begin{proof}[Proof of Theorem \ref{teomain}]
	Let $u_p$ be a solution to \eqref{pbp} then the proof is a consequence of Lemmas \ref{lemma_stime}, \ref{lemma_esistenzaz} and \ref{lemma_datoalbordo}.
\end{proof}

\begin{remark}
\label{giusti}
We stress that assumption \eqref{smallness} is essentially equivalent to the necessary and sufficient condition given in \cite{g} in order to get a minimum point for the associated functional, i.e. there exists $\varepsilon_0>0$ such that for every $A\subseteq \Omega$
\begin{equation}\label{giacr}
\left|\int_A f(x) \,dx\right|  \leq (1-\varepsilon_0){\rm Per}(A) \,. 
\end{equation}
In fact, as already mentioned, condition \eqref{smallness} implies \eqref{giacr} (see \cite{g76}); on the other hand if, for simplicity, we consider  a constant datum $f=\lambda$ satisfying  \eqref{giacr} on a ball $B_R$, then
$$
|\lambda | \leq (1-\varepsilon_0)\frac{{\rm Per}(B_R)}{|B_R|}<\frac{N}{R}\,,
$$
that implies that  \eqref{smallness} (i.e. $|\lambda | < \frac{N}{R}$ in this case) holds true . 

\medskip 
Although it is not effortless, in general, proving the equivalence among variational and weak solutions, we   emphasize that  the smallness assumption \eqref{smallness} is, in some sense,  sharp also in our  framework.
\medskip 

 In fact, let $0<r<R$ and consider $u$ to be  a solution of 
$$
\begin{cases}
\dis -\operatorname{div}\left(\frac{D u}{\sqrt{1+|D u|^2}}\right) = \lambda & \text{in}\; B_R,\\
u=0 & \text{on}\;\partial B_R,
\end{cases}$$
in the sense of Definition \ref{weakdef} where $\lambda \in\mathbb{R}$ is such that  $|\lambda|> \frac{N}{R}$, i.e., in particular,  condition \eqref{smallness} fails. We show that this choice of $\lambda$ leads to a contradiction. 

\medskip

In fact, let us consider  a sequence $v_k$ of smooth functions such that 
$$
v_k\to \chi_{B_r}\ \text{in}\ L^1(B_R)\ \ \text{and}\ \ \int_{B_R} |\nabla v_k| \to {\rm Per}(B_r)
$$ 
that is always possible (see for instance \cite[Theorem 3.1]{ls}).

Using $v_k$ to test the equation solved by $u$ one gets
$$
\int_{B_R} z\cdot \nabla v_k =\lambda \int_{B_R} v_k\,;
$$
that is 
$$
|\lambda | \int_{B_R} v_k\leq \|z\|_{L^{\infty}(\Omega)^N}\int_{B_R}|\nabla v_k|\leq \int_{B_R}|\nabla v_k|\,.
$$
Passing to the limit in $k$ one then has
$$
|\lambda |\leq \frac{{\rm Per}(B_r)}{|B_r|}=\frac{N}{r}\,.$$
Due to the arbitrariness of $r$ this latter fact  contradicts the assumption on $\lambda$. 
\end{remark}

\section{The non-autonomous case with a general nonlinearity}\label{4}

This section is devoted to the study  of the Dirichlet problem associated with the mean curvature equation in presence of a general,   possibly singular,  nonlinearity depending on $u$, i.e.  we consider 
\begin{equation}
	\label{pbh}
	\begin{cases}
		\dis -\operatorname{div}\left(\frac{D u}{\sqrt{1+|D u|^2}}\right) = h(u)f & \text{in}\;\Omega,\\
		u=0 & \text{on}\;\partial\Omega,
	\end{cases}
\end{equation}
for a positive  $f$ belonging  to $L^N(\Omega)$. The case of a merely  nonnegative $f$ may also be faced but it  requires some more technical arguments and it  will be  discussed in   Section \ref{sec:fnonnegative} below.

The function $h:[0,\infty)\mapsto [0,\infty]$ is continuous, finite outside the origin,  such that
\begin{equation}\label{h1}
	\begin{aligned}
		\displaystyle &\exists\;{c_1},\gamma,s_1>0\;\ \text{such that}\;\  h(s)\le \frac{c_1}{s^\gamma} \ \ \text{if} \ \ s\leq s_1\,, \ \  \ \ h(0)\not= 0, \\ &\text{and}\ \ 
		h(\infty):=\limsup_{s\to \infty} h(s)<\infty. 
	\end{aligned}
\end{equation}

For later purposes we denote by
\begin{equation}\label{hk}
	h_k(\infty):= \sup_{s\in[k,\infty)} h(s);
\end{equation}
observe that  $h_k(\infty)$ converges to $h(\infty)$ as $k\to \infty$.

Moreover, for the sake of exposition, from here on  we shall make   use of the following notation: 
\begin{equation}\label{sigma}
	\sigma:= \max(1,\gamma).
\end{equation}

\medskip

Although it is known  that the presence of zero order terms of these type produces regularizing effects in similar contexts,    we stress that treating  \eqref{pbh} is strikingly different than dealing with \eqref{pb} as, for instance, a possibly singular $h$ raises the need of a suitable control for the  zones in which the    approximating solutions degenerate.   Secondly, as we will see, the right-hand of the  equation in \eqref{pbh} will be only  locally integrable in general, even if $f$ belongs to $L^N(\Omega)$, bringing  some new technical difficulties. Furthermore,  solutions  need not  possess a trace in the classical sense if the nonlinearity grows too much at zero (i.e. $\sigma>1$).

\medskip

The above discussion makes clear that a particular attention  on the notion of  solution's definition is needed in order to properly  extend  the one of the previous section:
 \begin{defin}\label{defgeneralh}
	Let $ f>0$ a function in  $L^N(\Omega)$. A nonnegative function $u\in BV_{\rm{loc}}(\Omega)$  is a solution to \eqref{pbh} if $h(u)f \in L^1_{\rm loc}(\Omega)$ and if there exists $z\in \DM(\Omega)$ with $||z||_{L^\infty(\Omega)^N}\le 1$ such that	
	\begin{align}
		&	-\operatorname{div}z = h(u)f \ \ \ \text{in  } \mathcal{D'}(\Omega), \label{def_eq}\\
		&(z,D u)=\sqrt{1+|Du|^2} - \sqrt{1-|z|^2} \label{def_zp=1sing} \ \ \ \ \text{as measures in } \Omega,
		\\
		&	\lim_{\epsilon\to 0}  \fint_{\Omega\cap B(x,\epsilon)} u (y) dy = 0 \ \ \ \text{or} \ \ \ [z,\nu] (x)= -1 \label{def_bordoh}\ \ \ \text{for  $\mathcal{H}^{N-1}$-a.e. } x \in \partial\Omega.			
	\end{align}
\end{defin}

\begin{remark}
	Some comments about Definition \ref{defgeneralh} are in order. Firstly observe that the definition does not depend on $\gamma$. 
	Moreover, condition \eqref{def_bordoh} is a weak way to recover the Dirichlet boundary datum which is classical in similar contexts involving, for instance, the $1$-Laplace operator. In particular, let us underline that, in case $h=1$, condition \eqref{def_bordop=1} clearly implies \eqref{def_bordoh}. Obviously, the weaker request  \eqref{def_bordoh} comes from the lack of $BV$-trace for solutions which are, in general, not expected to be well defined  in presence of a strongly singular nonlinearity. 
	\\
	Finally let also underline that, if $h(0)=\infty$, the previous definition implies that $u>0$ almost everywhere in $\Omega$. 
\end{remark}

We begin stating the existence of a solution to \eqref{pbh}.
\begin{theorem}\label{teoexh}
	Let $f\in L^N(\Omega)$ be a positive function  such that \begin{equation}\label{smallnessh}\displaystyle ||f||_{L^N(\Omega)}<\frac{1}{\mathcal{S}_1h(\infty)},
	\end{equation} 
	and let $h$ satisfy \eqref{h1}.
Then there exists a bounded solution $u$ to problem \eqref{pbh} in the sense of Definition \ref{defgeneralh}. Moreover,  $u^{\sigma}\in BV(\Omega)$. 
\end{theorem}

Under proper additional assumptions on $h$, we will also show uniqueness of  bounded solutions to \eqref{pbh}.
\begin{theorem}\label{teounique}
Let  $h$ be decreasing and let $f$ be a positive function in $L^N(\Omega)$. Then there is at most one solution $u\in BV(\Omega)\cap L^\infty(\Omega)$ of  problem \eqref{pbh}.
\end{theorem}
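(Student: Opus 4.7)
The plan is to assume two bounded $BV$ solutions $u_1, u_2$ with associated vector fields $z_1, z_2 \in \mathcal{DM}^\infty(\Omega)$, $\|z_i\|_{L^\infty(\Omega)^N}\le 1$, and to show that $u_1=u_2$ by subtracting the equations and comparing them via Green's formula. Subtracting \eqref{def_eq} yields $-\operatorname{div}(z_1-z_2)=(h(u_1)-h(u_2))f$ in $\mathcal{D}'(\Omega)$. The heart of the argument is the measure monotonicity
\[
(z_1-z_2,\,D(u_1-u_2))\ge 0 \qquad\text{as measures on }\Omega,
\]
which I would derive by expanding the pairing as $(z_1,Du_1)-(z_1,Du_2)-(z_2,Du_1)+(z_2,Du_2)$: the first and fourth terms are given by the equality \eqref{def_zp=1sing}, while the two cross pairings satisfy $(z_i,Du_j)\le\sqrt{1+|Du_j|^2}-\sqrt{1-|z_i|^2}$ as measures. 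This last bound is the same Young-type inequality $z\cdot\xi+\sqrt{1-|z|^2}\le\sqrt{1+|\xi|^2}$ used at the end of the proof of Lemma \ref{lemma_esistenzaz}, applied to the absolutely continuous part via Lemma \ref{lemanzas}, together with $|(z_i,Du_j)^s|\le|D^s u_j|$ for the singular part. The four contributions then cancel exactly, giving the claimed nonnegativity.

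Next, I would apply the Green formula \eqref{green} to $u_1-u_2\in BV(\Omega)\cap L^\infty(\Omega)$ tested against the vector field $z_1-z_2$ (here $h(u_i)f\in L^1(\Omega)$ follows from $u_i\in L^\infty(\Omega)$ and $f\in L^N(\Omega)$, so that $\operatorname{div}(z_1-z_2)$ is a finite Radon measure and $(u_1-u_2)^*$ lies in the appropriate $L^1$ space), obtaining
\[
\int_\Omega (u_1-u_2)(h(u_1)-h(u_2))f = \int_\Omega (z_1-z_2,D(u_1-u_2)) - \int_{\partial\Omega}(u_1-u_2)[z_1-z_2,\nu]\,d\mathcal{H}^{N-1}.
\]
The left-hand side is $\le 0$ because $h$ is decreasing and $f>0$, and the first term on the right-hand side is $\ge 0$ by the previous step. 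For the boundary contribution I would invoke \eqref{def_bordoh}: at $\mathcal{H}^{N-1}$-a.e.\ $x\in\partial\Omega$, for each $i\in\{1,2\}$ either $u_i^b(x)=0$ or $[z_i,\nu](x)=-1$. A short case-by-case analysis on the pair $(u_1^b,u_2^b)$, using $|[z_i,\nu]|\le 1$, shows that $(u_1^b-u_2^b)([z_1,\nu]-[z_2,\nu])\le 0$ at $\mathcal{H}^{N-1}$-a.e.\ boundary point, so that $-\int_{\partial\Omega}(u_1-u_2)[z_1-z_2,\nu]\,d\mathcal{H}^{N-1}\ge 0$.

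Combining these three sign conditions forces every term to vanish; in particular $\int_\Omega (u_1-u_2)(h(u_1)-h(u_2))f=0$, and since $f>0$ a.e.\ and $h$ is strictly decreasing this forces $u_1=u_2$ a.e.\ in $\Omega$. The main technical obstacle I anticipate is the rigorous justification of the global Green formula when $h$ is singular at the origin: boundedness of the $u_i$ makes $h(u_i)f$ controllable away from $\{u_i=0\}$, but one must also check integrability near the portions of $\partial\Omega$ where the trace of $u_i$ vanishes. If this becomes delicate, I would first run the computation on the truncated set $\{u_i>\eta\}$ multiplied by the cutoff $V_\delta$ from \eqref{Vdelta} in order to localize away from $\partial\Omega$, and then let $\eta,\delta\to 0$; the pointwise nonnegativity of the pairing and the boundary case analysis pass to the limit by lower semicontinuity and Fatou.
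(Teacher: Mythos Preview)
Your proposal is correct and follows essentially the same route as the paper: subtract the two equations, test with $u_1-u_2$, apply the Green formula, and exploit the pairing inequality $(z_i,Du_j)\le\sqrt{1+|Du_j|^2}-\sqrt{1-|z_i|^2}$ (split into absolutely continuous and singular parts exactly as you describe) together with the boundary condition to force $\int_\Omega (h(u_1)-h(u_2))(u_1-u_2)f\ge 0$. The one technical point you flag---global $L^1$ integrability of $h(u_i)f$ when $h$ is singular at the origin---is handled in the paper not by a truncation/cutoff argument but by a direct appeal to Lemma~5.3 of \cite{DGOP}, which yields $h(u_i)f\in L^1(\Omega)$ and hence validates the extended test-function class $BV(\Omega)\cap L^\infty(\Omega)$ in one stroke.
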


\medskip

\subsection{Proof of Theorem \ref{teoexh}}
 As in the case $h\equiv 1$ we start by considering  the following approximation
\begin{equation}
	\label{pbph}
	\begin{cases}
		\dis \dis -\operatorname{div}\left(\frac{\nabla u_p}{\sqrt{1+|\nabla  u_p|^2}}\right) - (p-1)\operatorname{div}\left(|\nabla u_p|^{p-2}\nabla u_p\right)  = h_p(u_p)f & \text{in}\;\Omega,\\
		u_p=0 & \text{on}\;\partial\Omega,
	\end{cases}
\end{equation}
where $h_p(s):= T_{\frac{1}{p-1}}(h(s))$.
Again, it follows from \cite{ll} the existence of a solution $u_p\in W^{1,p}_0(\Omega)\cap L^\infty(\Omega)$ to \eqref{pbph}. Clearly, $u_p$ is nonnegative since $f$ is positive. We exhibit some basic estimates on $u_p$. Again we understand that  estimates are uniform if  there exists some $p_0>1$ such that the estimate holds uniformly  in the range $1<p\leq p_0$. Recall that $\sigma$ is defined in \eqref{sigma}.

\begin{lemma}\label{lem_stimepriorih}
Let $f \in L^N(\Omega)$ be a positive function such that \eqref{smallnessh} is in force, and let $h$ satisfy \eqref{h1}. Let $u_p$ be a solution of problem \eqref{pbph},  then $u_p$ is locally bounded in $BV(\Omega)$, $u_p^\sigma$ is bounded in $BV(\Omega)$, and $u_p$ is bounded in $L^\infty(\Omega)$ with respect to $p$. As a consequence, $u_p$ converges, up to subsequences, almost everywhere in $\Omega$ to a function $u\in BV_{\rm loc}(\Omega)\cap L^\infty(\Omega)$, in $L^q(\Omega)$ for any $q<\infty$,  and weak$^*$ in $L^\infty(\Omega)$ as $p\to 1^+$. Finally, for any nonnegative $\varphi\in C^1_c(\Omega)$, it holds
\begin{equation}\label{stimapmeno1}
	(p-1)\int_\Omega |\nabla u_p|^p\varphi^{p} \le C, 
\end{equation}
 for  some positive constant $C$ not depending on $p$.   
\end{lemma}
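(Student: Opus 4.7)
The strategy is to run three distinct testings on \eqref{pbph} in order to extract, respectively, a uniform $L^\infty$-bound, a uniform $BV$-bound on $u_p^\sigma$, and a localized information on $u_p$ itself and on the energy \eqref{stimapmeno1}. The role played by $u_p$ in Lemma \ref{lemma_stime} is here taken over by $u_p^\sigma$, which is the natural admissible test when $h$ is singular at the origin ($\gamma>1$), since $h(s)s^\sigma$ is uniformly bounded near zero by the choice $\sigma=\max(1,\gamma)$.

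\medskip

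\emph{$L^\infty$-bound.} Take $G_k(u_p)\ge 0$ as a test function and discard the nonnegative $p$-Laplace contribution. On the set $A_k=\{u_p>k\}$, for $k\ge s_1$, one has $h_p(u_p)\le h_k(\infty)$, so repeating verbatim the computation of Lemma \ref{lemma_stime} one is led to
$$\bigl(1-h_k(\infty)\,\mathcal{S}_1\,\|f\|_{L^N(\Omega)}\bigr)\int_\Omega|\nabla G_k(u_p)|\le |A_k|.$$
By \eqref{smallnessh} and the fact that $h_k(\infty)\searrow h(\infty)$ as $k\to\infty$, one can pick $k_0$ so large that the parenthesis is strictly positive for every $k\ge k_0$, and the classical Stampacchia iteration gives $\|u_p\|_{L^\infty(\Omega)}\le M$, with $M$ independent of $p$.

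\medskip

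\emph{Global $BV$-bound on $u_p^\sigma$.} Since $\sigma\ge 1$ and $u_p\ge 0$ is in $W^{1,p}_0(\Omega)\cap L^\infty(\Omega)$, the function $u_p^\sigma$ is itself admissible. Testing, dropping the $p$-Laplace term, and using the inequality $\frac{|\nabla u_p|^2}{\sqrt{1+|\nabla u_p|^2}}\ge |\nabla u_p|-1$ as in \eqref{stimeapriori1bis}, one obtains
$$\int_\Omega|\nabla u_p^\sigma|=\sigma\int_\Omega u_p^{\sigma-1}|\nabla u_p|\le \int_\Omega h_p(u_p)\,u_p^\sigma f+\sigma M^{\sigma-1}|\Omega|.$$
The right-hand side is uniformly bounded because $h(s)\,s^\sigma$ is bounded on $[0,M]$: near zero one has $h(s)\,s^\sigma\le c_1 s^{\sigma-\gamma}$ with $\sigma-\gamma\ge 0$ thanks to \eqref{sigma}, while on $[s_1,M]$ the continuity of $h$ does the job.

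\medskip

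\emph{Local $BV$-bound on $u_p$ and energy estimate \eqref{stimapmeno1}.} If $\sigma=1$ the preceding step is already a global $BV$-bound on $u_p$. Otherwise, using that $h$ is continuous and strictly positive in a neighborhood of $0$ (since $h(0)\ne 0$) and that $f>0$ a.e., a standard sub-barrier/comparison argument for problems of type \eqref{pbph} provides, for every $\omega\subset\subset\Omega$, a constant $c_\omega>0$ independent of $p\sim 1^+$ with $u_p\ge c_\omega$ on $\omega$. From the identity $|\nabla u_p|=\sigma^{-1}u_p^{1-\sigma}|\nabla u_p^\sigma|$ and the previous step, $u_p$ is then bounded in $BV(\omega)$. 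For \eqref{stimapmeno1}, test with $u_p^\sigma\varphi^p$ ($0\le\varphi\in C^1_c(\Omega)$): the two mixed terms are absorbed by Young's inequality and the bounds $\|u_p\|_{L^\infty}\le M$, $|\nabla u_p|(1+|\nabla u_p|^2)^{-\frac12}\le 1$; the principal contribution reduces to
$$\sigma(p-1)\int_\Omega u_p^{\sigma-1}|\nabla u_p|^p\varphi^p\le C,$$
and \eqref{stimapmeno1} follows from $u_p^{\sigma-1}\ge c_{\mathrm{supp}\,\varphi}^{\sigma-1}$. The announced convergences are then a consequence of the compact embedding $BV_{\rm loc}\hookrightarrow L^1_{\rm loc}$, a diagonal extraction, and the uniform $L^\infty$-bound.

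\medskip

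\emph{Main obstacle.} The delicate step is the uniform interior strict positivity $u_p\ge c_\omega>0$ on compact subsets when $\sigma>1$: it is the only tool converting $BV$-estimates on $u_p^\sigma$ into $BV$-estimates on $u_p$ and allowing to detach the term $(p-1)\int|\nabla u_p|^p\varphi^p$ from the weight $u_p^{\sigma-1}$ that would otherwise degenerate as $u_p\to 0$. Its proof requires a careful comparison with a suitable sub-barrier that behaves well simultaneously with the mean-curvature and the vanishing $p$-Laplace contributions.
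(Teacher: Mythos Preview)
Your first two steps (the $L^\infty$-bound via $G_k(u_p)$ and the global $BV$-bound on $u_p^\sigma$ via testing with $u_p^\sigma$) are correct and coincide with the paper's argument. The divergence occurs in the third step.

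\medskip
\textbf{The gap.} You obtain the local $BV$-bound on $u_p$ and the energy estimate \eqref{stimapmeno1} by invoking a uniform interior positivity $u_p\ge c_\omega>0$ on compact subsets, independent of $p$. You correctly flag this as the delicate point but do not prove it, calling it a ``standard sub-barrier/comparison argument.'' It is not: the operator in \eqref{pbph} is the sum of the mean-curvature operator and $(p-1)$ times the $p$-Laplacian, and constructing a sub-barrier valid \emph{uniformly} as $p\to1^+$ for this family is far from routine. Moreover, assumption \eqref{h1} allows $h$ to vanish somewhere in $(0,M]$, which obstructs any naive comparison. The paper neither establishes nor uses any positivity of $u_p$ at this stage; the positivity of the limit $u$ is recovered only later, in Lemma \ref{lem_campozh}, and only when $h(0)=\infty$.

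\medskip
\textbf{The paper's device.} The paper bypasses positivity entirely by a different choice of test function: take $v=(u_p-||u_p||_{L^\infty(\Omega)})\varphi^p\le 0$ with $0\le\varphi\in C^1_c(\Omega)$. Since $v\le 0$ and $h_p(u_p)f\ge 0$, the right-hand side of the tested equation is nonpositive and may be dropped, leaving
\begin{equation*}
\begin{aligned}
\int_\Omega\frac{|\nabla u_p|^2\varphi^p}{\sqrt{1+|\nabla u_p|^2}}&+(p-1)\int_\Omega|\nabla u_p|^p\varphi^p \\
&\le p||u_p||_{L^\infty(\Omega)}\int_\Omega|\nabla\varphi|\varphi^{p-1}+p(p-1)||\nabla\varphi||_{L^\infty(\Omega)^N}||u_p||_{L^\infty(\Omega)}\int_\Omega|\nabla u_p|^{p-1}\varphi^{p-1}.
\end{aligned}
\end{equation*}
A single application of Young's inequality to the last integral absorbs it, for $p$ close to $1$, into the term $(p-1)\int_\Omega|\nabla u_p|^p\varphi^p$ on the left, yielding simultaneously the local $BV$-bound and \eqref{stimapmeno1} without any recourse to positivity of $u_p$.
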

\begin{proof}
	Let us firstly show that $u_p$ is bounded by a constant independent of $p$. We just sketch the calculation since the reasoning is very similar to the one given in the proof of Lemma \ref{lemma_stime}.
	
	\medskip 
	For  $k>0$,  we  consider  $G_k(u_p)$ as a test function in the weak formulation of \eqref{pbph}, yielding to (recall  $h_k(\infty)$  defined as in \eqref{hk})
	\begin{equation}\label{stimeapriorihbounded}
	\begin{aligned}
			\int_\Omega \frac{|\nabla G_k(u_p)|^2}{\sqrt{1+|\nabla G_k(u_p)|^2}} \le  \int_\Omega h_p(u_p)f G_k(u_p) &\le h_k(\infty)||f||_{L^N(\Omega)} ||G_k(u_p)||_{L^{\frac{N}{N-1}}(\Omega)} 
	\\
		&\le h_k(\infty)||f||_{L^N(\Omega)} \mathcal{S}_1 \int_\Omega |\nabla G_k(u_p)|,
	\end{aligned}		
	\end{equation}
having getting rid of the nonnegative term involving the $p$-laplacian and by using the H\"older and the Sobolev inequalities.
	Now, if  $A_k:=\{x\in\Omega: |u_p(x)| > k\}$, we can  write  
	\begin{equation*}\label{stimeapriorihbounded2}
		\int_\Omega \frac{|\nabla G_k(u_p)|^2}{\sqrt{1+|\nabla G_k(u_p)|^2}}  = \int_{A_k} {\sqrt{1+|\nabla G_k(u_p)|^2}}  - \int_{A_k} \frac{1}{\sqrt{1+|\nabla G_k(u_p)|^2}}  \ge \int_\Omega |\nabla G_k(u_p)| - |A_k|,
	\end{equation*}
which, gathered in \eqref{stimeapriorihbounded}, implies that
	\begin{equation*}
		\int_\Omega |\nabla G_k(u_p)| \le \frac{|A_k|}{1-h_k(\infty)||f||_{L^N(\Omega)} \mathcal{S}_1},
	\end{equation*}
	where $k\ge \overline{k}$ for some $\overline{k}>0$ such that $1-h_{\overline{k}}(\infty)||f||_{L^N(\Omega)} \mathcal{S}_1>0$.
	The previous estimate allows to reason as in the proof of Lemma \ref{lemma_stime} in order to conclude that there exists some positive constant $C$ independent of $p$ and such that $||u_p||_{L^\infty(\Omega)}\le C$.

	\medskip
	
	Now we show that $u_p^\sigma$ is bounded in $BV(\Omega)$ with respect to $p$. To this aim we take $u_p^\sigma$ as a test function in the weak formulation of \eqref{pbph} obtaining 
	\begin{equation}\label{stimeapriori1h}
		\sigma\int_{\Omega} \frac{|\nabla u_p|^2u_p^{\sigma-1}}{\sqrt{1+|\nabla u_p|^2}} + \sigma(p-1)\int_{\Omega} |\nabla u_p|^pu_p^{\sigma-1} = \int_\Omega h_p(u_p)f u_p^\sigma.
	\end{equation}
	The right-hand of \eqref{stimeapriori1h} can be estimated as follows 
	\begin{equation}
	\begin{aligned}
	\label{stimeapriori2h}
	\int_\Omega h_p(u_p)f u_p^\sigma &\le c_1s_1^{\sigma-\gamma}\int_{\{u_p< s_1\}} f + h_{s_1}(\infty)\int_{\{u_p\ge s_1\}} fu_p^\sigma \le C,
	\end{aligned}
\end{equation}
where $C$ is a positive constant not depending on $p$ since $u_p$ is bounded in $L^\infty(\Omega)$ with respect to $p$. 

\medskip

For the left-hand of \eqref{stimeapriori1h} one can write
\begin{equation}\label{stimeapriori3h}
	\begin{aligned}
    	\sigma\int_{\Omega} \frac{|\nabla u_p|^2u_p^{\sigma-1}}{\sqrt{1+|\nabla u_p|^2}} + \sigma(p-1)\int_{\Omega} |\nabla u_p|^pu_p^{\sigma-1} &\ge 	\sigma\int_{\Omega} \frac{(1+|\nabla u_p|^2)u_p^{\sigma-1}}{\sqrt{1+|\nabla u_p|^2}} - \sigma\int_{\Omega} \frac{u_p^{\sigma-1}}{\sqrt{1+|\nabla u_p|^2}}
    	\\
    	& \ge \sigma\int_{\Omega}\sqrt{1+|\nabla u_p|^2}u_p^{\sigma-1} -\sigma\int_{\Omega} u_p^{\sigma-1}
    	\\
    	& \ge \int_{\Omega} |\nabla u_p^\sigma| -\sigma\int_{\Omega} u_p^{\sigma-1}.
	\end{aligned}
\end{equation}
Thus, collecting \eqref{stimeapriori2h} and \eqref{stimeapriori3h} in \eqref{stimeapriori1h}, one is lead to
\begin{equation*}\label{stimeapriori4h}
	\int_{\Omega} |\nabla u_p^\sigma| \le C + \sigma\int_{\Omega} u_p^{\sigma-1} \le C, 
\end{equation*}
for a constant $C$ not depending on $p$ since, again,  $u_p$ is bounded and  globally in $BV(\Omega)$ provided $\gamma\le 1$. 

\medskip
Now let us focus on proving that $u_p$ is locally bounded in $BV(\Omega)$ when $\gamma>1$.

\medskip
Let us assume $0\le \varphi\in C^1_c(\Omega)$ and let us take $v=(u_p-||u_p||_{L^\infty(\Omega)})\varphi^p$ to test  \eqref{pbph}. Hence, since $v$ is nonpositive one has
\begin{equation*}
	\begin{aligned}
 		\int_{\Omega} \frac{|\nabla u_p|^2\varphi^p}{\sqrt{1+|\nabla u_p|^2}} &+ p\int_{\Omega} \frac{\nabla u_p\cdot \nabla\varphi}{\sqrt{1+|\nabla u_p|^2}}(u_p-||u_p||_{L^\infty(\Omega)})\varphi^{p-1} + (p-1)\int_{\Omega} |\nabla u_p|^p\varphi^p 
 		\\
 		&+ p(p-1)\int_{\Omega} |\nabla u_p|^{p-2}\nabla u_p \cdot \nabla\varphi(u_p-||u_p||_{L^\infty(\Omega)})\varphi^{p-1} \le 0.
 	\end{aligned}	
\end{equation*}
From the previous inequality one simply gets 
\begin{equation}\label{stimeaprioriTK}
	\begin{aligned}
		&\int_{\Omega} \frac{|\nabla u_p|^2\varphi^p}{\sqrt{1+|\nabla u_p|^2}} + (p-1)\int_{\Omega} |\nabla u_p|^p\varphi^p 
		\\
		&\le p ||u_p||_{L^\infty(\Omega)} \int_{\Omega} |\nabla \varphi|\varphi^{p-1} +   p(p-1)||\nabla \varphi||_{L^\infty(\Omega)^N}||u_p||_{L^\infty(\Omega)}\int_{\Omega} |\nabla u_p|^{p-1}\varphi^{p-1}.
	\end{aligned}	
\end{equation}
Let observe that the Young inequality gives that 
$$\int_{\Omega} |\nabla u_p|^{p-1}\varphi^{p-1} \le \frac{p-1}{p}\int_{\Omega} |\nabla u_p|^{p}\varphi^{p} + \frac{1}{p}|\Omega|,$$
which, gathered in \eqref{stimeaprioriTK}, means that

\begin{equation}\label{stimalocale1}
	\begin{aligned}
		&\int_{\Omega} \frac{|\nabla u_p|^2\varphi^p}{\sqrt{1+|\nabla u_p|^2}} + (p-1)\left(1- ||\nabla \varphi||_{L^\infty(\Omega)^N}||u_p||_{L^\infty(\Omega)}(p-1)\right)\int_{\Omega} |\nabla u_p|^p\varphi^p 
		\\
		&\le p||\nabla \varphi||_{L^\infty(\Omega)^N}||\varphi||_{L^\infty(\Omega)}^{p-1}||u_p||_{L^\infty(\Omega)}|\Omega| +   ||\nabla \varphi||_{L^\infty(\Omega)^N}||u_p||_{L^\infty(\Omega)}|\Omega|(p-1).
	\end{aligned}	
\end{equation}
 Hence it is sufficient requiring $p$ small enough to obtain a nonnegative second term on the left-hand of \eqref{stimalocale1}. Therefore, since we have already shown that $u_p$ is bounded in $L^\infty(\Omega)$ with respect to $p$, we have that
 \begin{equation*}\label{stimalocale2}
 	\begin{aligned}
 		&\int_{\Omega} \frac{|\nabla u_p|^2\varphi^p}{\sqrt{1+|\nabla u_p|^2}} \le C,
 	\end{aligned}	
 \end{equation*}
for some positive constant $C$ which does not depend on $p$. Reasoning similarly to \eqref{stimeapriori3h}, one can prove that 
 \begin{equation*}\label{stimalocale3}
	\begin{aligned}
		&\int_{\Omega} |\nabla u_p|\varphi^p \le C,
	\end{aligned}	
\end{equation*}
namely $u_p$ is locally bounded in $BV(\Omega)$ with respect to $p$.

It is also clear from \eqref{stimalocale1} that, for any $0\le \varphi\in C^1_c(\Omega)$, we have  
\begin{equation*}
	(p-1)\int_\Omega |\nabla u_p|^p\varphi^{p}\le C, 
\end{equation*}
for some positive constant $C$ not depending on $p$.
\medskip

The previous estimates  assure that $u_p$ converges almost everywhere in $\Omega$, up to subsequences,  to a function $u\in BV_{\rm loc}(\Omega)$ as $p\to 1^+$. The $L^\infty$-estimate on $u_p$ gives that the sequence converges to $u$ in $L^q(\Omega)$ for any $q<\infty$ and weak$^*$ in $L^\infty(\Omega)$ as $p\to 1^+$. This concludes the proof.
\end{proof}

\begin{remark}
	Let observe that, if $\gamma\le 1$, $u_p$ is bounded in $BV(\Omega)$  with respect to $p$ and its almost everywhere limit in $p$ belongs to $BV(\Omega)$ as well. On the other hand, in general, the global estimate in $BV(\Omega)$ is only shown for power $\sigma>1$ of $u_p$ which is, obviously,  a weaker statement.
	
	Secondly, we want to highlight that estimate \eqref{stimapmeno1} gives that
\begin{equation}\label{stimapmeno1bis}
	(p-1)\int_\omega |\nabla u_p|^p\le C \ \text{for any }\omega\subset \subset \Omega, 
\end{equation}
where $C$, even depending on $\omega$, does not depend on $p$. This energy estimate will be used  to show the vanishing   of the second term in the approximation scheme as $p\to 1^+$. 
\end{remark}
	We explicitly mention that, from here on, $u$ is the function found in the previous lemma; namely it is (up to subsequences) the almost everywhere limit in $\Omega$ of $u_p$ as $p\to 1^+$. Let us now prove that there exists a vector field $z$ satisfying \eqref{def_eq}; for later purposes we will also gain an extension of the admissible test functions in \eqref{def_eq}.

\begin{lemma}\label{lem_campozh}
	Under the assumptions of Lemma \ref{lem_stimepriorih} there exists $z\in \DM(\Omega)$ with $||z||_{L^\infty(\Omega)^N}\le 1$ and such that 
	\begin{equation}\label{eqdefestesa}
		-\int_\Omega v\operatorname{div}z = \int_\Omega h(u)fv, \ \ \ \forall v \in BV(\Omega)\cap L^\infty(\Omega).
	\end{equation}
\end{lemma}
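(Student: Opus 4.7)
The strategy is to pass to the limit $p \to 1^+$ in \eqref{pbph} along the subsequence provided by Lemma \ref{lem_stimepriorih}. Since $|\nabla u_p/\sqrt{1+|\nabla u_p|^2}| \le 1$ almost everywhere, weak-$*$ compactness in $L^\infty(\Omega)^N$ produces, along a further subsequence, a limit $z$ with $\|z\|_{L^\infty(\Omega)^N} \le 1$ by lower semicontinuity of the norm. This is the candidate vector field.

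Testing \eqref{pbph} against $\varphi \in C^1_c(\Omega)$: the flux term converges to $\int_\Omega z \cdot \nabla\varphi$ by weak-$*$ convergence; the $p$-Laplace correction vanishes by the local energy bound \eqref{stimapmeno1bis} applied on $\supp\varphi$ combined with H\"older's inequality, exactly as in \eqref{termineenergiazero}. For the right-hand side, the almost everywhere convergence $u_p \to u$ together with the continuity of $h$ on $(0,\infty)$ gives $h_p(u_p) \to h(u)$ almost everywhere, while the uniform estimate $\int_\Omega h_p(u_p) f u_p^\sigma \le C$ from \eqref{stimeapriori2h}---which, via the structural bound $h(s)s^\sigma \le c_1$ near the origin, controls the integrand on the possibly degenerate region---provides uniform local integrability and hence $h_p(u_p) f \varphi \to h(u) f \varphi$ in $L^1(\Omega)$ by a Vitali-type argument. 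This yields $-\operatorname{div} z = h(u) f$ in $\mathcal{D}'(\Omega)$ and $h(u) f \in L^1_{\rm loc}(\Omega)$.

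To obtain \eqref{eqdefestesa} for every $v \in BV(\Omega) \cap L^\infty(\Omega)$, I need global integrability of $h(u) f$. For this, I test \eqref{pbph} against a Lipschitz cut-off $\psi_\varepsilon \in W^{1,p}_0(\Omega)$ with $\psi_\varepsilon \to 1$ pointwise, $0 \le \psi_\varepsilon \le 1$, and $|\nabla \psi_\varepsilon| \le C/\varepsilon$ supported in an $\varepsilon$-strip around $\partial\Omega$: the flux contribution is uniformly bounded by a constant multiple of $\mathcal{H}^{N-1}(\partial\Omega)$, the $(p-1)$-term vanishes as before, and Fatou's lemma applied after sending first $p \to 1^+$ and then $\varepsilon \to 0^+$ yields $\int_\Omega h(u) f < \infty$. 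Therefore $\operatorname{div} z = -h(u) f \in L^1(\Omega)$, so $z \in \DM(\Omega)$; since $v \in BV(\Omega)\cap L^\infty(\Omega)$ is bounded and measurable, the pointwise identity $-\operatorname{div} z = h(u) f$ integrated against $v$ immediately produces \eqref{eqdefestesa}.

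The chief obstacle is passing to the limit in the singular nonlinearity $h_p(u_p) f$ in regions where $u_p$ may approach zero: the structural bound $h(s) s^\sigma \le c_1$ near the origin together with the a priori control from \eqref{stimeapriori2h} is precisely what keeps the integrand uniformly dominated there, turning otherwise formal convergences into rigorous Vitali and Fatou applications.
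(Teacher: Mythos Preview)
There is a genuine gap in your passage to the limit on the right-hand side. You claim that the bound $\int_\Omega h_p(u_p)f u_p^\sigma\le C$ together with the structural inequality $h(s)s^\sigma\le c_1$ near the origin ``controls the integrand on the possibly degenerate region'' and hence yields $h_p(u_p)f\varphi\to h(u)f\varphi$ in $L^1(\Omega)$ by Vitali. But this bound controls $h_p(u_p)f u_p^\sigma$, not $h_p(u_p)f$; on $\{u_p\le\delta\}$ the factor $u_p^\sigma$ is small, so the estimate gives no information on the size of $h_p(u_p)f$ there. Uniform $L^1_{\rm loc}$ boundedness of $h_p(u_p)f$ (which you do have from testing with $\varphi\ge 0$) is not equi-integrability, and Vitali needs the latter. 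The paper's proof supplies the missing idea: test \eqref{pbph} with $V_\delta(u_p)\varphi$, drop the nonpositive terms coming from $V_\delta'$, and obtain
\[
\int_{\{u_p\le\delta\}} h_p(u_p)f\varphi \ \le\ \int_\Omega \frac{\nabla u_p\cdot\nabla\varphi}{\sqrt{1+|\nabla u_p|^2}}V_\delta(u_p) + (p-1)\int_\Omega |\nabla u_p|^{p-2}\nabla u_p\cdot\nabla\varphi\,V_\delta(u_p).
\]
Passing to $p\to 1^+$ and then $\delta\to 0^+$, the right-hand side tends to $\int_{\{u=0\}} z\cdot\nabla\varphi=0$, since $f>0$ and $h(0)=\infty$ force $\{u=0\}$ to be null. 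One then splits $\int_\Omega h_p(u_p)f\varphi$ over $\{u_p\le\delta\}$ and $\{u_p>\delta\}$; the second piece passes by dominated convergence because $h$ is bounded on $[\delta,\infty)$.

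A secondary issue: your cut-off argument for global integrability is carried out at the level of $u_p$, so you must control $(p-1)\int_\Omega |\nabla u_p|^{p-2}\nabla u_p\cdot\nabla\psi_\varepsilon$ with $\nabla\psi_\varepsilon$ supported near $\partial\Omega$. When $\gamma>1$ only the \emph{local} energy bound \eqref{stimapmeno1bis} is available, so ``vanishes as before'' does not apply. The paper sidesteps this by first establishing $-\operatorname{div}z=h(u)f\ge 0$ in $\mathcal D'(\Omega)$ and then invoking \cite[Lemma~5.3]{DGOP}, which is essentially your cut-off argument performed on the limit field $z$---where there is no $(p-1)$-term to worry about.
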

\begin{proof}
	We first observe that, since $|\nabla u_p|(1+|\nabla u_p|^2)^{-\frac{1}{2}}\le 1$, $\nabla u_p(1+|\nabla u_p|^2)^{-\frac{1}{2}}$ converges weak$^*$ in $L^\infty(\Omega)^N$ to a vector field $z$ as $p\to 1^+$ such that $||z||_{L^\infty(\Omega)^N}\le 1$.

	 We first show that  
	\begin{equation}\label{eqdef}
		-\operatorname{div}z = h(u)f \ \ \text{in}\ \mathcal{D'}(\Omega)\,.
	\end{equation}	

 We then consider a function $\varphi \in C^1_c(\Omega)$. Passing  to the limit in the first term in the weak formulation of \eqref{pbph} is effortless so we focus on the remaining two terms. 
	
	\medskip
	
	Thanks to \eqref{stimapmeno1} (see also \eqref{stimapmeno1bis}) one has 
	\begin{equation}\label{pmeno1azero}
	\begin{aligned}	
		\left|(p-1)\int_\Omega |\nabla u_p|^{p-2}\nabla u_p \cdot \nabla \varphi \right| &\le (p-1)\left((p-1)\int_{\{\supp \varphi\}} |\nabla u_p|^p\right)^{\frac{p-1}{p}}\left(\int_{\{\supp \varphi\}} |\nabla \varphi|^p\right)^{\frac{1}{p}}
	\\\\
	&\le (p-1)C^{\frac{p-1}{p}}||\nabla \varphi||_{L^\infty(\Omega)^N} |\{\supp \varphi\}|^{\frac{1}{p}} \stackrel{p\to1^+}{\longrightarrow} 0.
	\end{aligned}	
\end{equation} 
	
	\medskip
	
	It remains to pass to the limit the right-hand of \eqref{pbph}. If $h(0)<\infty$ one gets no problems using dominated convergence theorem, so that, without losing generality,  let us  assume that $h(0)=\infty$.
	
	\medskip
	
	We first show  that $h(u)f$ is locally integrable. Let  $\varphi\in C^1_c(\Omega)$ nonnegative  to test  \eqref{pbph};  it is clear that
	\begin{equation}\label{stimaL1loc}
		\int_\Omega h_p(u_p)f\varphi\  { = } \int_\Omega \frac{\nabla u_p\cdot \nabla \varphi}{\sqrt{1+|\nabla u_p|^2}} + (p-1)\int_\Omega |\nabla u_p|^{p-2}\nabla u_p\cdot \nabla \varphi \le C,
	\end{equation}  
	where we used both the boundedness of the vector field $\nabla u_p(1+|\nabla u_p|^2)^{-\frac{1}{2}}$ and   \eqref{stimapmeno1bis}. 
	 An application of the Fatou Lemma as $p\to 1^+$ in \eqref{stimaL1loc} gives that 
	\begin{equation}\label{l1loc}
		\int_\Omega h(u)f\varphi \le C,
	\end{equation} 
	that implies the local integrability of  $h(u)f$. We underline that, since $h(0)=\infty$ and $f>0$ almost everywhere in $\Omega$, \eqref{l1loc} also entails that $u>0$ almost everywhere in $\Omega$. 
	
	\medskip
	
	In order to  check the validity of  \eqref{eqdef} let us consider  $V_\delta(u_p)\varphi$ ($0\le \varphi \in C^1_c(\Omega)$ and $V_\delta$ is defined in \eqref{Vdelta}) in the weak formulation of \eqref{pbph}, obtaining
	\begin{equation}\label{stimasing1}
		\begin{aligned}
			\int_{\{u_p\le \delta\}} h(u_p)f\varphi & \le \int_\Omega h(u_p)fV_\delta(u_p)\varphi \le		
		 \int_\Omega \frac{\nabla u_p\cdot \nabla \varphi V_\delta(u_p)}{\sqrt{1+|\nabla u_p|^2}} 
			\\ &+ (p-1)\int_\Omega |\nabla u_p|^{p-2}\nabla u_p\cdot \nabla \varphi V_\delta(u_p)
		\end{aligned}
	\end{equation}	  
	where we have gotten rid of the nonpositive terms involving $V'_\delta$.
	Hence we can take the limsup as $p\to 1^+$ deducing that 
	\begin{equation}\label{stimasing2}
		\begin{aligned}
			&\limsup_{p\to 1^+}\int_{\{u_p\le \delta\}} h_p(u_p)f\varphi  \le 		
			\int_{\Omega} z\cdot \nabla \varphi V_\delta(u)
		\end{aligned}
	\end{equation}
	and the second term on the right-hand of \eqref{stimasing1} goes to zero as $p\to 1^+$ as for   \eqref{pmeno1azero} (recall that $V_\delta(s)\le 1$ for any $s\ge 0$). 
	\\
	Now, it  follows from \eqref{stimasing2} that it holds
	\begin{equation}\label{stimasing3}
	\begin{aligned}
			&\lim_{\delta\to 0}\limsup_{p\to 1^+}\int_{\{u_p\le \delta\}} h_p(u_p)f\varphi  = \int_{\{u=0\}} z\cdot \nabla \varphi \stackrel{u>0}{=} 0.
	\end{aligned}
	\end{equation}
	Estimate \eqref{stimasing3} is the key in order to show the validity of \eqref{eqdef}. Indeed,	let us split as 
	\begin{equation*}
		\begin{aligned}
		 \int_\Omega h_p(u_p)f\varphi = \int_{\{u_p\le \delta\}} h_p(u_p)f\varphi + \int_{\{u_p > \delta\}} h_p(u_p)f\varphi,
		\end{aligned}
	\end{equation*}
where $\delta \not\in \{\eta: |u=\eta|>0\}$ which is a countable set. The first term on the right-hand of the previous vanishes  in, resp.,  $p$ and $\delta$ thanks to \eqref{stimasing3}; the second one, instead,  passes to the limit in $p,\delta$ by two applications of the Lebesgue Theorem since $h(u)f \in L^1_{\rm loc}(\Omega)$. This proves that \eqref{eqdef} holds. Clearly, from the previous arguments, it simply follows the case of $\varphi$ with general sign. Finally observe that the fact that  $z$ is actually in $\DM(\Omega)$ and then the possibility to  extend the set of test functions as stated in \eqref{eqdefestesa} follow from an application of Lemma $5.3$ of \cite{DGOP}. This concludes the proof.
	\end{proof}

Next lemma is about the identification of the vector field emphasized  by  \eqref{def_zp=1sing}.

\begin{lemma}\label{lemma_zh}
Under the assumptions of Lemma \ref{lem_stimepriorih} it holds
\begin{equation}\label{esistenzaz2h}
	(z,Du)=\sqrt{1+|Du|^2} - \sqrt{1-|z|^2} \ \ \text{as measures in } \Omega,
\end{equation}
where $u$ and $z$ are resp.  the function and the vector field given by Lemmas \ref{lem_stimepriorih} and \ref{lem_campozh}. 
\end{lemma}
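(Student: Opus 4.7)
The plan is to adapt Lemma \ref{lemma_esistenzaz}, localized to handle the fact that, when $\gamma>1$, $u$ is only in $BV_{\rm loc}(\Omega)$ and $h(u)$ may blow up on $\{u=0\}$. Fix a nonnegative $\varphi\in C^1_c(\Omega)$ and test \eqref{pbph} against $v=u_p\varphi$. Rewriting
\begin{equation*}
\frac{|\nabla u_p|^2}{\sqrt{1+|\nabla u_p|^2}}=\sqrt{1+|\nabla u_p|^2}-\frac{1}{\sqrt{1+|\nabla u_p|^2}}
\end{equation*}
and discarding the nonnegative $(p-1)\int_\Omega|\nabla u_p|^p\varphi$ term produces an inequality whose left-hand side I pass to the $\liminf$ as $p\to 1^+$: the lower semicontinuity of $J_1$ and the weak upper semicontinuity of $J_2$ recalled in Section \ref{due} give $\int_\Omega\sqrt{1+|Du|^2}\varphi$ and $-\int_\Omega\sqrt{1-|z|^2}\varphi$ respectively; the mixed term $\int_\Omega u_p\nabla u_p\cdot\nabla\varphi\,(1+|\nabla u_p|^2)^{-1/2}$ converges to $\int_\Omega u z\cdot\nabla\varphi$ by coupling the weak$^*$ convergence from Lemma \ref{lem_campozh} with the strong $L^q$ convergence of $u_p$; the residual $(p-1)$-contribution vanishes as in \eqref{pmeno1azero} via \eqref{stimapmeno1} and the uniform sup-bound on $u_p$.

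The main technical step, which I expect to be the main obstacle, is the identification of the limit of $\int_\Omega h_p(u_p)fu_p\varphi$. I plan to split this integral using $V_\delta(u_p)$ and $1-V_\delta(u_p)$: on the support of $V_\delta(u_p)$ the bound $u_p V_\delta(u_p)\le 2\delta$ combined with the uniform local $L^1$ estimate \eqref{stimaL1loc} for $h_p(u_p)f$ gives an $O(\delta)$ contribution, while on $\{u_p>\delta\}$ the sequence $h_p(u_p)$ is dominated by $\sup_{[\delta,M]}h<\infty$ (with $M$ the uniform $L^\infty$ bound on $u_p$), so dominated convergence applies. As $\delta\to 0$, the a.e.\ positivity of $u$ (Lemma \ref{lem_campozh}) makes $V_\delta(u)\to 0$ a.e., whence $\int_\Omega h_p(u_p)f u_p\varphi\to\int_\Omega h(u)fu\varphi$. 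Assembling the convergences,
\begin{equation*}
\int_\Omega\sqrt{1+|Du|^2}\varphi-\int_\Omega\sqrt{1-|z|^2}\varphi+\int_\Omega u z\cdot\nabla\varphi\le\int_\Omega h(u)f u\varphi.
\end{equation*}

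To eliminate the pairing-like term $\int_\Omega u z\cdot\nabla\varphi$ I invoke the pairing definition \eqref{dist1} with $v=u$ and test function $\varphi$: since $\operatorname{div} z=-h(u)f$ is a function, $u^*$ may be replaced by $u$ and
\begin{equation*}
\int_\Omega\varphi(z,Du)=-\int_\Omega u\varphi\operatorname{div} z-\int_\Omega u z\cdot\nabla\varphi=\int_\Omega h(u)fu\varphi-\int_\Omega u z\cdot\nabla\varphi.
\end{equation*}
Substituting and cancelling the mixed term delivers $(z,Du)\ge\sqrt{1+|Du|^2}-\sqrt{1-|z|^2}$ as measures on $\Omega$. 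The reverse inequality is standard and purely algebraic: Lemma \ref{lemanzas} identifies $(z,Du)^a=z\cdot D^a u$, and the Cauchy--Schwarz bound $z\cdot\xi+\sqrt{1-|z|^2}\le\sqrt{1+|\xi|^2}$ in $\R^{N+1}$ applied with $\xi=D^a u$ handles the absolutely continuous part; the singular part is controlled by $(z,Du)^s\le|D^s u|$, valid since $\|z\|_{L^\infty(\Omega)^N}\le 1$. Combined with the decomposition $\sqrt{1+|Du|^2}=\sqrt{1+|D^a u|^2}\,\mathcal L^N+|D^s u|$ recalled in Section \ref{due}, equality \eqref{esistenzaz2h} is established.
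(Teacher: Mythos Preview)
Your argument is correct, but it departs from the paper's route at the choice of test function: the paper tests with $u_p^{\sigma}\varphi$ (with $\sigma=\max(1,\gamma)$) rather than $u_p\varphi$. That choice makes $h_p(u_p)u_p^{\sigma}$ uniformly bounded near the origin (since $h(s)\le c_1 s^{-\gamma}$), so the limit of the right-hand side follows from plain dominated convergence without any $V_\delta$ splitting. The price is that the paper obtains first the inequality for $(z,Du^{\sigma})$ and must then invoke a chain-rule type result (\cite[Proposition~4.5]{crde}) to transfer $(z,Du^{\sigma})^s=|D u^{\sigma}|^s$ back to $(z,Du)^s=|D^s u|$. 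Your approach, testing with $u_p\varphi$, avoids this detour entirely and handles the singular part directly; the extra work you pay is the $V_\delta$ splitting of $\int_\Omega h_p(u_p)fu_p\varphi$, which you carry out cleanly using the local $L^1$ bound \eqref{stimaL1loc} and the bound $u_p V_\delta(u_p)\le 2\delta$. Both arguments are local (the test function has compact support), so the fact that $u$ is only in $BV_{\rm loc}(\Omega)$ causes no trouble either way. One small remark: your positivity step ``$V_\delta(u)\to 0$ a.e.'' only holds when $h(0)=\infty$; when $h(0)<\infty$ the splitting is unnecessary since $h_p(u_p)u_p$ is already uniformly bounded, so you may want to state this case distinction explicitly.
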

\begin{proof}
	Let us take $u_p^\sigma\varphi$  as a test function in the weak formulation of \eqref{pbph} where $ \varphi \in C^1_c(\Omega)$ is nonnegative; we get 
	\begin{equation*}
	\begin{aligned}
		\sigma\int_\Omega \frac{|\nabla u_p|^2u_p^{\sigma-1}\varphi}{\sqrt{1+|\nabla u_p|^2}} &+ \int_\Omega \frac{\nabla u_p\cdot \nabla \varphi u_p^{\sigma}}{\sqrt{1+|\nabla u_p|^2}} + (p-1)\sigma\int_\Omega |\nabla u_p|^pu_p^{\sigma-1}\varphi  \\
		&+ (p-1)\int_\Omega |\nabla u_p|^{p-2}\nabla u_p\cdot \nabla \varphi u_p^{\sigma} = \int_\Omega h_p(u_p)fu_p^\sigma\varphi.
	\end{aligned}
	\end{equation*}
 Then, getting rid of the nonnegative third term on the left-hand,  a simple manipulation of  the first term
  yields 
	\begin{equation}\label{stimaperprovaz}
	\begin{aligned}
		\sigma\int_\Omega \sqrt{1+|\nabla u_p|^2}u_p^{\sigma-1}\varphi &- \sigma\int_\Omega \sqrt{1- \frac{|\nabla u_p|^2}{1+|\nabla u_p|^2}} u_p^{\sigma-1}\varphi  
		\le \int_\Omega h_p(u_p)fu_p^\sigma\varphi \\
		&- \int_\Omega \frac{\nabla u_p\cdot \nabla \varphi u_p^{\sigma}}{\sqrt{1+|\nabla u_p|^2}}- (p-1)\int_\Omega |\nabla u_p|^{p-2}\nabla u_p\cdot \nabla \varphi u_p^{\sigma}.
	\end{aligned}
\end{equation}
Now we want to take the liminf,  as $p\to 1^+$,  in the previous inequality.

 Let firstly observe that the first term in \eqref{stimaperprovaz} is nothing else than $\int_\Omega \sqrt{\sigma^2u_p^{2\sigma-2}+ |\nabla u_p^\sigma|^2}\varphi$ which 
is lower semicontinuous with respect to the $L^1$- convergence of $u_p^\sigma$.

On the other hand, the term $$-\int_\Omega \sqrt{1- \frac{|\nabla u_p|^2}{1+|\nabla u_p|^2}} u_p^{\sigma-1}\varphi$$ can be  seen to be lower semincontinuous as $p\to 1^+$. Indeed,  let $F(x)= \sqrt{1-|x|^2}$ which is concave and let $w_p=\frac{\nabla u_p}{\sqrt{1+|\nabla u_p|^2}}$ then
\begin{equation*}
	-\int_\Omega F(w_p)u_p^{\sigma-1} = \int_\Omega F(w_p)\left(u^{\sigma-1}- u_p^{\sigma-1}\right) - \int_\Omega F(w_p)u^{\sigma-1}.
\end{equation*}
The first term on the right-hand of the previous easily goes to zero as $p\to 1^+$ (recall that $|F(x)|\le 1$). While, reasoning  as in the proof of Lemma \ref{lemma_esistenzaz},  one can show that  the second one is weakly lower semicontinuous with respect to the $L^1$ convergence. 

\medskip 

 Now let observe that the first term on the right-hand of \eqref{stimaperprovaz} passes to the limit as $p\to 1^+$ thanks to the convergence of $u_p$ to $u$ in $L^{q}(\Omega)$ for any $q<\infty$ and thanks to the fact that the function  $h(s)s^\sigma$ is bounded.
\\
The second term on the right-hand of \eqref{stimaperprovaz} easily passes to the limit using that  $\nabla u_p (1+|\nabla u_p|^2)^{-\frac{1}{2}}$ converges weak$^*$ in $L^\infty(\Omega)$ to $z$ and $u_p^\sigma$ converges to $u^\sigma$ in $L^{q}(\Omega)$ with $q<\infty$.  Finally, as $u_p$ is uniformly bounded, reasoning similarly  as for  \eqref{pmeno1azero},  the last term tends to zero.

\medskip
Therefore,  gathering together all the previous we  are lead to 
	\begin{equation}\label{pairing1}
	\begin{aligned}
		\sigma\int_\Omega \sqrt{1+|D u|^2}u^{\sigma-1}\varphi - \sigma\int_\Omega \sqrt{1- |z|^2} u^{\sigma-1}\varphi  
		\le \int_\Omega h(u)fu^\sigma\varphi - \int_\Omega z\cdot \nabla \varphi u^\sigma.
	\end{aligned}
\end{equation}

\medskip

Hence,  from \eqref{eqdef},  the fact that $u^\sigma\in BV(\Omega)$, and also using that  $h(u)fu^\sigma \in L^1(\Omega)$, one easily gets that
	\begin{equation}\label{hperu}
	-u^\sigma\operatorname{div}z = h(u)fu^\sigma \ \ \text{in  }\mathcal{D'}(\Omega).
\end{equation}
Then, from \eqref{pairing1} one has
	\begin{equation*}\label{pairing2}
	\begin{aligned}
		\sigma\int_\Omega \sqrt{1+|D u|^2}u^{\sigma-1}\varphi - \sigma\int_\Omega \sqrt{1- |z|^2} u^{\sigma-1}\varphi  
		\le \int_\Omega (z, Du^\sigma)\varphi.
	\end{aligned}
\end{equation*}
For the absolutely continuous part one can reason exactly as in the proof of Lemma \ref{lemma_esistenzaz} if $h(0)<\infty$ (i.e. $\sigma = 1$); otherwise one has $u>0$ almost everywhere in $\Omega$, yielding to 

$$
	z\cdot D^a u = \sqrt{1 +|D^a u|^2} - \sqrt{1-|z|^2}.
$$

 Concerning  the singular part, following again the lines of the proof of Lemma \ref{lemma_esistenzaz},  one gets that,  locally as measures
\begin{equation*}
	(z,Du^\sigma)^s \ge |Du^\sigma|^s.
\end{equation*}
Moreover,  the reverse inequality is trivial since $||z||_{L^\infty(\Omega)^N}\le 1$, then it actually holds
\begin{equation*}
	(z,Du^\sigma)^s = |Du^\sigma|^s.
\end{equation*}
Then, as  $f(t)=t^\sigma$ is increasing,   one can apply Proposition $4.5$ of \cite{crde} deducing that 
\begin{equation*}
	(z,Du)^s = |Du|^s= |D^s u|,
\end{equation*}
locally as measures. This concludes the proof.
\end{proof}

In order to conclude the proof of Theorem \ref{teoexh} it remains to show the following result. 

\begin{lemma}\label{lemma_datoalbordoh}
	Under the assumptions of Lemma \ref{lem_stimepriorih} it holds either
	\begin{equation} \label{bordoh}
	\lim_{\epsilon\to 0}  \fint_{\Omega\cap B(x,\epsilon)} u (y) dy = 0 \ \ \ \text{or} \ \ \ [z,\nu] (x)= -1 \ \ \ \text{for  $\mathcal{H}^{N-1}$-a.e. } x \in \partial\Omega,	
	\end{equation}	
	where $u$ and $z$ are the function and the vector field found respectively in Lemmas \ref{lem_stimepriorih} and \ref{lem_campozh}. 
\end{lemma}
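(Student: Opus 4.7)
The strategy is to extend the argument of Lemma~\ref{lemma_datoalbordo} by testing \eqref{pbph} with $u_p^\sigma\in W^{1,p}_0(\Omega)$, which has zero Sobolev trace on $\partial\Omega$. Using the identity
$$\sigma\frac{|\nabla u_p|^2 u_p^{\sigma-1}}{\sqrt{1+|\nabla u_p|^2}}=\sqrt{\sigma^2 u_p^{2\sigma-2}+|\nabla u_p^\sigma|^2}-\sigma\frac{u_p^{\sigma-1}}{\sqrt{1+|\nabla u_p|^2}},$$
dropping the nonnegative $p$-Laplacian contribution, and adding the vanishing boundary term $\int_{\partial\Omega} u_p^\sigma\,d\mathcal{H}^{N-1}=0$, one gets
$$\int_\Omega \sqrt{\sigma^2 u_p^{2\sigma-2}+|\nabla u_p^\sigma|^2}+\int_{\partial\Omega} u_p^\sigma\,d\mathcal{H}^{N-1}-\sigma\int_\Omega \frac{u_p^{\sigma-1}}{\sqrt{1+|\nabla u_p|^2}}\le \int_\Omega h_p(u_p) f u_p^\sigma.$$

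Taking $\liminf_{p\to 1^+}$, I would control the first two terms from below by a Reshetnyak-type lower semicontinuity for the area-type integrand $F(s,\xi)=\sqrt{\sigma^2 s^{2(\sigma-1)/\sigma}+|\xi|^2}$ applied to $v_p=u_p^\sigma\to u^\sigma$ in $L^1(\Omega)$, whose $BV$-relaxation with zero Dirichlet boundary datum automatically produces the contribution $\int_{\partial\Omega} u^\sigma\,d\mathcal{H}^{N-1}$; the third term is handled by the upper semicontinuity used in Lemma~\ref{lemma_esistenzaz} together with the strong convergence of $u_p^{\sigma-1}$; the right-hand side passes by dominated convergence since \eqref{h1} yields uniform integrability of $h_p(u_p)fu_p^\sigma$. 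This gives
$$\int_\Omega \sqrt{\sigma^2 u^{2\sigma-2}+|D^a u^\sigma|^2}+|D^s u^\sigma|(\Omega)+\int_{\partial\Omega} u^\sigma\,d\mathcal{H}^{N-1}-\sigma\int_\Omega \sqrt{1-|z|^2}u^{\sigma-1}\le \int_\Omega h(u) f u^\sigma.$$

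Next, since $z\in\mathcal{DM}^\infty(\Omega)$ with $\operatorname{div}z=-h(u)f\in L^1(\Omega)$ (Lemma~\ref{lem_campozh}) and $u^\sigma\in BV(\Omega)\cap L^\infty(\Omega)$, the Green formula \eqref{green} combined with \eqref{des2} gives
$$\int_\Omega h(u) f u^\sigma=\int_\Omega (z,Du^\sigma)-\int_{\partial\Omega} u^\sigma[z,\nu]\,d\mathcal{H}^{N-1}.$$
Reasoning as in Lemma~\ref{lemma_zh} for $u^\sigma$, and using the $BV$ chain rule $D^a u^\sigma=\sigma u^{\sigma-1}D^a u$ together with Proposition~4.5 of \cite{crde} for the singular part, one identifies
$$(z,Du^\sigma)^a=\sqrt{\sigma^2 u^{2\sigma-2}+|D^a u^\sigma|^2}-\sigma u^{\sigma-1}\sqrt{1-|z|^2},\qquad (z,Du^\sigma)^s=|D^s u^\sigma|.$$
Substituting these into the two previous displays, the interior quantities cancel and one is left with
$$\int_{\partial\Omega} u^\sigma(1+[z,\nu])\,d\mathcal{H}^{N-1}\le 0.$$
Since $u^\sigma\ge 0$ and $\|[z,\nu]\|_{L^\infty(\partial\Omega)}\le 1$ force the integrand to be pointwise nonnegative, $u^\sigma(1+[z,\nu])=0$ for $\mathcal{H}^{N-1}$-a.e.\ $x\in\partial\Omega$, so either the $BV$ trace of $u^\sigma$ vanishes at $x$ or $[z,\nu](x)=-1$.

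When $\sigma=1$ this is directly \eqref{bordoh}. When $\sigma>1$, the $BV$-trace of $u^\sigma$ equals $\lim_{\epsilon\to 0}\fint_{\Omega\cap B(x,\epsilon)}u^\sigma$ for $\mathcal H^{N-1}$-a.e.\ $x\in\partial\Omega$, and Jensen's inequality applied to the concave map $t\mapsto t^{1/\sigma}$ gives $\fint_{\Omega\cap B(x,\epsilon)} u\le \bigl(\fint_{\Omega\cap B(x,\epsilon)} u^\sigma\bigr)^{1/\sigma}$, so the vanishing of the trace of $u^\sigma$ forces the averages of $u$ to go to zero, delivering \eqref{bordoh}. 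I expect the most delicate step to be the lower semicontinuity at the first liminf, because the area-type integrand has coefficient $\sigma u_p^{\sigma-1}$ depending on the very sequence under consideration; this forces one to work with the full nonlinear relaxation of $F$ on $BV(\Omega)$ together with its Dirichlet boundary contribution, rather than with the classical area functional.
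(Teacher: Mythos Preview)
Your proposal is correct and follows essentially the same route as the paper's proof: test \eqref{pbph} with $u_p^\sigma$, rewrite $\sigma u_p^{\sigma-1}\sqrt{1+|\nabla u_p|^2}$ as $\sqrt{\sigma^2 u_p^{2\sigma-2}+|\nabla u_p^\sigma|^2}$, take $\liminf$, apply the Green formula, cancel the interior terms, and conclude $u^\sigma(1+[z,\nu])=0$ on $\partial\Omega$; the passage from $u^\sigma$ to $u$ via Jensen is equivalent to the paper's H\"older step.

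Two minor remarks. First, the semicontinuity you flag as ``the most delicate step'' is precisely the one already dealt with in the proof of Lemma~\ref{lemma_zh} (the paper observes there that the first term equals $\int_\Omega\sqrt{\sigma^2 u_p^{2\sigma-2}+|\nabla u_p^\sigma|^2}\varphi$ and is l.s.c.\ with respect to the $L^1$ convergence of $u_p^\sigma$); here one simply repeats that argument with $\varphi\equiv 1$, adding the boundary contribution via the functional $J_1$ of Section~\ref{due}, so no new Reshetnyak machinery is needed. Second, the paper does not re-identify $(z,Du^\sigma)$ as you do: it uses directly the already-established identity \eqref{esistenzaz2h}, i.e.\ $(z,Du)=\sqrt{1+|Du|^2}-\sqrt{1-|z|^2}$, together with the chain rule implicit in Lemma~\ref{lemma_zh}, which makes the cancellation of the interior terms immediate. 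Your explicit decomposition of $(z,Du^\sigma)$ is of course equivalent, just slightly longer.
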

\begin{proof}
	Plug  $u_p^\sigma$ as  test function in the weak formulation of \eqref{pbph}; then after straightforward manipulations, one yields to (recall that $u_p^\sigma$ has zero trace on $\partial\Omega$)
	\begin{equation*}
	\begin{aligned}
		\sigma\int_\Omega \sqrt{1+|\nabla u_p|^2}u_p^{\sigma-1} - \sigma\int_\Omega \sqrt{1- \frac{|\nabla u_p|^2}{1+|\nabla u_p|^2}} u_p^{\sigma-1}  + \int_{\partial\Omega} u_p^\sigma \ d \mathcal{H}^{N-1} 
		\le \int_\Omega h_p(u_p)fu_p^\sigma.
	\end{aligned}
\end{equation*}	
Reasoning as in Lemma \ref{lemma_zh} one can take the liminf as $p\to1^+$ in the previous, obtaining that
	\begin{equation*}
	\begin{aligned}
		\sigma\int_\Omega \sqrt{1+|D u|^2}u^{\sigma-1} - \sigma\int_\Omega \sqrt{1- |z|^2} u^{\sigma-1}  + \int_{\partial\Omega} u^\sigma \ d \mathcal{H}^{N-1} 
		\le \int_\Omega h(u)fu^\sigma.
	\end{aligned}
\end{equation*}	
Hence, thanks to \eqref{hperu}, after applying  the Green formula \eqref{green}, one obtain
\begin{equation*}
	\begin{aligned}
		\sigma\int_\Omega \sqrt{1+|D u|^2}u^{\sigma-1} - \sigma\int_\Omega \sqrt{1- |z|^2} u^{\sigma-1}  + \int_{\partial\Omega} u^\sigma \ d \mathcal{H}^{N-1} 
		\le \int_\Omega (z, D u^\sigma) - \int_{\partial\Omega} [u^\sigma z,\nu] \ d \mathcal{H}^{N-1}.
	\end{aligned}
\end{equation*}
Therefore,  it follows from \eqref{esistenzaz2h} that 
 \begin{equation}\label{cita}
 	\begin{aligned}
 		\int_{\partial\Omega} \left([u^\sigma z,\nu] +u^\sigma \right) \ d \mathcal{H}^{N-1} 
 		\le 0.
 	\end{aligned}
 \end{equation}
Now, using Lemma \ref{lem_campozh},  $z\in \DM(\Omega) $ and, as $u^\sigma \in BV(\Omega)\cap L^{\infty}(\Omega)$, then, by \eqref{des2},   $[u^\sigma z,\nu] = u^\sigma[z,\nu]$.

Recalling that $[z,\nu]\ge -1$ one then deduces by \eqref{cita}    that either $u^\sigma = 0$ or $[z,\nu] = -1$ for $\mathcal{H}^{N-1}$-almost every $x\in \partial\Omega$. Then,  \eqref{bordoh} is obtained as follows: by  \cite[Theorem 3.87]{afp}, if $x\in\partial\Omega$ is such that $u^{\sigma}(x)=0$,  one has				 $$\lim_{\epsilon\to 0} \fint_{\Omega\cap B(x,\epsilon)} u^{\sigma} (y) dy=0\,.$$
If $\sigma>1$,  we can use the  H\"older's inequality to get
$$
		\begin{array}{l}
		 	 		\displaystyle\fint_{\Omega\cap B(x,\epsilon)} u(y) dy \leq \left( \fint_{\Omega\cap B(x,\epsilon)} u^{\sigma} (y) dy\right)^{\frac{1}{\sigma}}\frac{|\Omega\cap B(x,\epsilon)|^{\frac{1}{\sigma'}}}{\epsilon^{\frac{N}{\sigma'}}}\leq C\left( \fint_{\Omega\cap B(x,\epsilon)} u^{\sigma} (y) dy\right)^{\frac{1}{\sigma}}\stackrel{\epsilon\to 0}{\longrightarrow} 0\,;
		 	 		\end{array} 
		 	 		$$
		 	 		that implies \eqref{bordoh}.
\end{proof}

\subsection{Proof of Theorem \ref{teounique}}
Let us prove  the uniqueness result in case of a decreasing lower order term $h$, namely
	\begin{proof}[Proof of Theorem \ref{teounique}]
		Let $u_1$ and $u_2$ be solutions to problem \eqref{pbh} in the sense of Definition \ref{defgeneralh} and let us denote by, respectively, $z_1$ and $z_2$ the corresponding vector fields. 
		We observe that, by Lemma $5.3$ of \cite{DGOP} it follows that both  $h(u_1)f$ and $h(u_2)f$ are in  $L^1(\Omega)$. Then a standard  density argument implies 
		\begin{equation}\label{test}
			-\int_\Omega v\,\operatorname{div} z_i = \int_\Omega  h(u_i)fv, \ \ \forall v\in BV(\Omega)\cap L^\infty(\Omega), \ \ i=1,2.
		\end{equation}	
		 Now we take $v=u_1- u_2$ in the  two weak formulations \eqref{test} related, resp.,  to $u_1$ and $u_2$ and we take the difference. Thus, the application of  \eqref{green} yields  
		\begin{equation*}\label{inequo}
			\begin{aligned}
				\int_\Omega (z_1, Du_1)-\int_\Omega(z_2, Du_1) &+ \int_\Omega(z_2, Du_2) - \int_\Omega(z_1, Du_2) - \int_{\partial\Omega}(u_1-u_2)[z_1,\nu])\, d\mathcal H^{N-1}				    	
				\\
				&+\int_{\partial\Omega}(u_1-u_2)[z_2,\nu])\, d\mathcal H^{N-1}						    	
				= \int_\Omega (h(u_1)- h(u_2))f(u_1- u_2).
			\end{aligned}
		\end{equation*} 
		Let us focus on the second and the fourth term of the previous; we claim that, using \eqref{def_zp=1sing},  one gets that 
		$$\sqrt{1+|Du_1|^2} - \sqrt{1-|z_2|^2}  \ge (z_2,Du_1)$$
		and that
		$$\sqrt{1+|Du_2|^2} - \sqrt{1-|z_1|^2}  \ge (z_1,Du_2)\,,$$
		as measures in $\Omega$. 
		Indeed we proceed  by splitting the measures in the absolutely continuous and singular parts.
		Concerning  the absolutely continuous part of the measures one should prove that
		$$\sqrt{1+|D^a u_1|^2} - \sqrt{1-|z_2|^2}  \ge (z_2,Du_1)^a=z_2\cdot D^a u_1$$
		and that
		$$\sqrt{1+|D^a u_2|^2} - \sqrt{1-|z_1|^2}  \ge (z_1,Du_2)^a=z_1\cdot D^a u_2,$$
		which are purely algebraic inequalities once one recalls that
		$$z_i = \frac{D^a u_i}{\sqrt{1+|D^a u_i|^2}}\ \ \ i=1,2.$$
		On the other hand the  singular part of those  inequalities simply follows  by recalling that $||z_i||_{L^\infty(\Omega)^N}\le 1$.

		As regards the boundary terms,  it follows from \eqref{def_bordoh} that (recall that $u\in BV(\Omega)$) 
		$$
		u_{i}(1+[z_{i},\nu])=0\ \ \mathcal H^{N-1}-\text{a.e. on}\ \partial\Omega\  \ \text{for}\ \  i=1,2.
		$$
		This takes to
		\begin{align*}
			\int_\Omega (h(u_1)- h(u_2))f(u_1- u_2)
			\ge \int_{\partial\Omega}(u_1 +u_1 [z_2,\nu]  )\, d\mathcal H^{N-1}				    	
		 +\int_{\partial\Omega}(u_2[z_1,\nu] + u_2)\, d\mathcal H^{N-1}. 					    		\end{align*}				Again, observing that  $[z_i,\nu] \in [-1,1]$ for $i=1,2$,  the right-hand of the previous is nonnegative. This gives that 
		$$\int_\Omega (h(u_1)- h(u_2))f(u_1- u_2)\ge 0,$$
		which implies $u_1=u_2$ a.e. in $\Omega$ since $f>0$ a.e. in $\Omega$.
	\end{proof}

\section{Extensions,   examples and remarks}
\label{5}
\subsection{The case of a nonnegative source $f$}
\label{sec:fnonnegative}

The previous results can be extended to the case $f$ being purely nonnegative. As we will point out, if $h(0)<\infty$, then the arguments of Section \ref{4} work as well with straightforward minor modifications. 
If   $h(0)=\infty$,  one can not deduce in general that the solution to \eqref{pbh} is positive almost everywhere in $\Omega$; though one can include this case in the theory  by re-adapting an idea  of \cite{DGOP}.   Let us briefly explain how. 
We define the following function
$$
\Psi(s):=
\begin{cases}
	1 \ &\text{if }h(0)<\infty,\\
	\chi_{\{s>0\}} &\text{if }h(0)=\infty.
\end{cases}
$$
One can suitably modify the notion of solution as follow:
 \begin{defin}\label{defnonnegative}
	 A nonnegative function $u\in BV_{\rm{loc}}(\Omega)\cap L^{\infty}(\Omega)$  having $u^\sigma \in BV(\Omega)$ and $\Psi(u) \in BV_{\rm{loc}}(\Omega)$ is a solution to \eqref{pbh} if $h(u)f \in L^1_{\rm loc}(\Omega)$ and if there exists $z\in \DM_{\rm loc}(\Omega)$ with $||z||_{L^\infty(\Omega)^N}\le 1$ such that	
	\begin{align}
		&	-\Psi^*(u)\operatorname{div}z = h(u)f \ \ \ \text{in  } \mathcal{D'}(\Omega), \label{def_eqnon}\\
		&(z,D u)=\sqrt{1+|Du|^2} - \sqrt{1-|z|^2} \label{def_zp=1singnon} \ \ \ \ \text{as measures in } \Omega,
		\\
		&	u^\sigma(x) + [u^\sigma z,\nu] (x)= 0 \label{def_bordohnon}\ \ \ \text{for  $\mathcal{H}^{N-1}$-a.e. } x \in \partial\Omega.			
	\end{align}
\end{defin}
Let us stress again that the previous definition coincides with  
Definition \ref{defgeneralh} both in case $h(0)<\infty$ and in case $f>0$ (even if $h(0)=\infty$). 

\medskip

Apart from the distributional formulation the real novelty in Definition \ref{defnonnegative} is given by the boundary datum. In fact, in this case we do not know if, in general,  $z\in \DM(\Omega)$;  in order  to recover the boundary datum we need to impose the weaker version of \eqref{def_bordoh} given by \eqref{def_bordohnon} as $u^\sigma z \in \DM(\Omega)$ (see Lemma \ref{21}).

\medskip

Keeping in mind these facts  one can show  the following to hold; its proof relies on an effortless re-adaptation of the proof of \cite[Theorem 6.4]{DGOP}. 
\begin{theorem}
	Let $f\in L^N(\Omega)$ be nonnegative and such that \begin{equation}\label{smallnesshi}\displaystyle ||f||_{L^N(\Omega)}<\frac{1}{\mathcal{S}_1h(\infty)},
\end{equation} 
and let $h$ satisfy \eqref{h1}. 
	Then there exists a  solution to problem \eqref{pbh} in the sense of Definition \ref{defnonnegative}.
\end{theorem}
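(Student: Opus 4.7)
The plan is to establish the result via the same $p$-Laplacian approximation used in Section \ref{4}: solve \eqref{pbph} directly with the nonnegative datum $f$, obtaining $u_p\in W^{1,p}_0(\Omega)\cap L^\infty(\Omega)$, and then let $p\to 1^+$. All the estimates of Lemma \ref{lem_stimepriorih} are insensitive to the strict positivity of $f$: the Stampacchia iteration built on $G_k(u_p)$ and \eqref{smallnesshi} yields a uniform $L^\infty$ bound on $u_p$; testing with $u_p^\sigma$ provides a uniform global $BV$ bound on $u_p^\sigma$; the local test $(u_p-\|u_p\|_{L^\infty(\Omega)})\varphi^p$ gives a uniform local $BV$ bound on $u_p$; and estimate \eqref{stimapmeno1} still holds. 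Standard compactness then delivers, up to a subsequence, $u_p\to u$ almost everywhere and in every $L^q(\Omega)$, $u_p^\sigma\to u^\sigma$ weak$^*$ in $BV(\Omega)$, and $\nabla u_p/\sqrt{1+|\nabla u_p|^2}$ converging weak$^*$ to some $z\in L^\infty(\Omega)^N$ with $\|z\|_{L^\infty(\Omega)^N}\le 1$.

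Identifying the limit equation is the delicate step when $h(0)=\infty$, since one cannot conclude $u>0$ almost everywhere in $\Omega$: indeed $u$ may vanish wherever $f$ does. Mimicking Lemma \ref{lem_campozh}, testing against a nonnegative $\varphi\in C^1_c(\Omega)$ together with Fatou's lemma yields $h(u)f\in L^1_{\rm loc}(\Omega)$; next, testing with $V_\delta(u_p)\varphi$ and letting first $p\to 1^+$ and then $\delta\to 0$ shows that the limit equation reads $-\operatorname{div} z=h(u)f$ on $\{u>0\}$ while $z$ is locally divergence-free on $\{u=0\}$. This is precisely $-\Psi^*(u)\operatorname{div} z = h(u)f$ in $\mathcal{D}'(\Omega)$, proving \eqref{def_eqnon} and, simultaneously, that $z\in\DM_{\rm loc}(\Omega)$; approximating $\chi_{\{u>0\}}$ from below by $T_k(ku)$ furnishes the regularity $\Psi(u)\in BV_{\rm loc}(\Omega)$. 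The pairing identity \eqref{def_zp=1singnon} is then recovered exactly as in Lemma \ref{lemma_zh}, by testing with $u_p^\sigma\varphi$, using lower semicontinuity of the area and Corollary~3.9 of \cite{brezis}, and transferring from $u^\sigma$ back to $u$ through Proposition~4.5 of \cite{crde}.

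The main obstacle is the boundary condition, which has to be cast in the weaker form \eqref{def_bordohnon} because $z$ is in general only locally in $\DM$ and a global normal trace $[z,\nu]$ is not available. However, $u^\sigma z\in \DM(\Omega)$ since $u^\sigma\in BV(\Omega)\cap L^\infty(\Omega)$ and $(u^\sigma)^*\in L^1(\Omega,\operatorname{div} z)$ thanks to the distributional equation just established; hence the trace $[u^\sigma z,\nu]$ is well defined via Lemma \ref{21}. Adapting Lemma \ref{lemma_datoalbordoh}, I would test \eqref{pbph} with $u_p^\sigma$, take the $\liminf$ as $p\to 1^+$ using the semicontinuities above, and apply the Green formula \eqref{green} to $u^\sigma z$ combined with \eqref{def_zp=1singnon} to arrive at
\begin{equation*}
\int_{\partial\Omega}\bigl(u^\sigma + [u^\sigma z,\nu]\bigr)\, d\mathcal{H}^{N-1}\le 0\,.
\end{equation*}
Since $\|z\|_{L^\infty(\Omega)^N}\le 1$ one has $|[u^\sigma z,\nu]|\le u^\sigma$ $\mathcal{H}^{N-1}$-a.e. on $\partial\Omega$, so the integrand is pointwise nonnegative, forcing the desired boundary identity \eqref{def_bordohnon}.
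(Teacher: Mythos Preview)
Your proposal is correct and follows exactly the route the paper indicates: the paper gives no detailed argument for this theorem, stating only that ``its proof relies on an effortless re-adaptation of the proof of \cite[Theorem~6.4]{DGOP}'', and your outline is precisely such a re-adaptation of the Section~\ref{4} scheme. The two steps you pass over most quickly---that $\Psi(u)\in BV_{\rm loc}(\Omega)$ (your $T_k(ku)$ approximation does not by itself give a uniform local $BV$ bound) and that $|[u^\sigma z,\nu]|\le u^\sigma$ $\mathcal H^{N-1}$-a.e.\ on $\partial\Omega$ when $z$ is only in $\DM_{\rm loc}(\Omega)$ so that \eqref{des2} is unavailable---are exactly the points for which the paper invokes the machinery of \cite{DGOP,dgs}, where they are handled.
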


\subsection{The datum $f$ in the critical Marcinkiewicz space $L^{N,\infty}(\Omega)$}
\label{52}

All the results of the previous sections can also  be suitably extended to the case of a general nonnegative datum $f$ belonging to the Marcinkiewicz space $L^{N,\infty}(\Omega)$. Let us refer the interested reader to the monograph \cite{PKF} for an introduction on basic properties of these spaces.    
Here we just mention the following Sobolev embedding  inequality:
$$
	\|v\|_{L^{p^{\ast},p}(\Omega)}\leq \tilde{\mathcal S}_{p} \|\nabla v\|_{W^{1,p}_{0}(\Omega)}, \ \ \forall v\in W^{1,p}_{0}(\Omega),
$$
where
$$ \tilde{\mathcal S}_{p}=\frac{p\Gamma(1+\frac{N}{2})^{\frac{1}{N}}}{\sqrt{\pi}(N-p)} \stackrel{p\to 1^{+}}{\longrightarrow}  \tilde{\mathcal S}_{1}= [(N-1)\omega_{N}^{\frac{1}{N}}]^{-1}, \ \ $$
and with $\Gamma$ we denoted the Gamma function.

\medskip 

With a suitable re-adaptation of the arguments of the previous sections, one could prove the following result:

\begin{theorem}\label{tuttolor}
	Let $0\le f\in L^{N,\infty}(\Omega)$ such that \begin{equation}\label{sharp}\displaystyle ||f||_{L^{N,\infty}(\Omega)}<\frac{1}{\tilde{\mathcal{S}}_1 h(\infty)}\,,\end{equation}
	and let $h$ satisfy \eqref{h1}. Then there exists a bounded \bk solution $u$ to problem \eqref{pbh} in the sense of Definition \ref{defnonnegative}. 	
\end{theorem}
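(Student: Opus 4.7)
The plan is to re-run the whole scheme of Sections \ref{sec3}--\ref{sec:fnonnegative} on the $p$-regularized problems \eqref{pbph} with $h_{p}(s)=T_{1/(p-1)}(h(s))$, replacing the $L^{N}$--$L^{N'}$ duality used there with the Lorentz duality $L^{N,\infty}$--$L^{N',1}$. Two analytic facts make this substitution go through: the generalized H\"older inequality $\int_{\Omega}fg\leq \|f\|_{L^{N,\infty}(\Omega)}\|g\|_{L^{N',1}(\Omega)}$ and the Sobolev--Lorentz embedding stated just before the theorem, which at $p=1$ gives $\|v\|_{L^{N',1}(\Omega)}\leq \tilde{\mathcal S}_{1}\int_{\Omega}|Dv|$ for $v\in BV(\Omega)$ with zero trace.

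The delicate step, and the main obstacle, will be the uniform $L^{\infty}$ bound on $u_{p}$. Testing the equation with $G_{k}(u_{p})$ and mimicking the computation \eqref{stimeapriorihbounded}, one arrives at
\begin{equation*}
\int_{\Omega}|\nabla G_{k}(u_{p})|\leq\frac{|A_{k}|}{1-h_{k}(\infty)\tilde{\mathcal S}_{1}\|f\|_{L^{N,\infty}}}\,,
\end{equation*}
where the denominator is positive for $k$ large thanks to \eqref{sharp} and $h_{k}(\infty)\to h(\infty)$. Combining this with the Sobolev--Lorentz embedding and with the explicit value $\|\chi_{A_{h}}\|_{L^{N',1}}=N'|A_{h}|^{1/N'}$ would yield an inequality of the form $|A_{h}|^{1/N'}\leq C(h-k)^{-1}|A_{k}|$, from which the classical Stampacchia lemma provides a uniform $L^{\infty}$ bound on $u_{p}$. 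The weak-type norm of $f\chi_{A_{k}}$ does not decay with $|A_{k}|$: this is precisely what forces the smallness condition to be imposed globally and the target Sobolev space to be sharpened from $L^{N'}$ to the smaller $L^{N',1}$.

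Once the $L^{\infty}$ bound is available, the remaining lemmas should be a routine transcription. The $BV$ estimate (global if $\gamma\leq 1$, local if $\gamma>1$) will follow by testing with $u_{p}^{\sigma}$ or $u_{p}^{\sigma}\varphi$ exactly as in Lemma \ref{lem_stimepriorih}, using $L^{N,\infty}(\Omega)\hookrightarrow L^{1}(\Omega)$ on the bounded set $\Omega$ to control $\int_{\Omega}f u_{p}^{\sigma}$. The identification of the weak$^{*}$ limit $z$ and the verification of \eqref{def_eqnon}--\eqref{def_zp=1singnon} will reproduce verbatim the proofs of Lemmas \ref{lem_campozh}--\ref{lemma_zh}, since they rely only on the $L^{\infty}$ bound on $z$, on the energy estimate \eqref{stimapmeno1}, and on the semicontinuity of the area/anti-area functionals, none of which sees the Lebesgue-vs.-Lorentz distinction. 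Finally, the boundary condition \eqref{def_bordohnon} will follow from the Green formula as in Lemma \ref{lemma_datoalbordoh}.
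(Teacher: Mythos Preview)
Your proposal is correct and aligns precisely with what the paper indicates: the paper itself does not supply a detailed proof of Theorem~\ref{tuttolor} but simply remarks that it follows by ``a suitable re-adaptation of the arguments of the previous sections'' using the Sobolev--Lorentz embedding recalled just before the statement. Your outline makes this re-adaptation explicit, correctly identifying the $L^{N,\infty}$--$L^{N',1}$ H\"older inequality together with the Stampacchia iteration for the $L^\infty$ bound as the only place where the argument genuinely changes, and noting that the remaining lemmas carry over verbatim.
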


\subsection{An explicit example}

We want to highlight the sharpness  of the bound \eqref{sharp} in order to obtain bounded solutions. For the sake of exposition we choose $\Omega = B_1(0)$ and  $h\equiv 1$. \label{533}
\begin{example}\label{53}
 Let $N\geq 3$ and let $0<\alpha < N-1$; a straightforward computation gives that a radial solution to problem 
 
$$
\label{pbie}
\begin{cases}
\dis -\operatorname{div}\left(\frac{D u_\alpha }{\sqrt{1+|D u_\alpha|^2}}\right) = \frac{N-1}{|x|} g_\alpha (|x|)=:f_\alpha(x)& \text{in}\; B_1(0),\\
u_\alpha=0 & \text{on}\;\partial B_1(0)\,
\end{cases}
$$
is given, if $|x|=r$,  by $u_\alpha (x)=r^{-\alpha} - 1$ provided $g_\alpha:(0,1)\mapsto \mathbb{R}^+$ is given by 

$$
g_{\alpha}(r)=\frac{\alpha r^{-\alpha-1}(\alpha^2 r^{-2\alpha -2}- \frac{\alpha+2 - N}{N-1})}{(1+\alpha^2 r^{-2\alpha-2})^{\frac{3}{2}}}\,.
$$

Observe that both
 $$|g_\alpha (r)|\leq 1\ \ \text\ \ \forall\ 0<\alpha<N-1\,,$$ and 
 $$
 \lim_{r\to 0^+} g_\alpha (r)=1\,.
 $$
 
It follows that
 $$
\left\|f_\alpha(x)\right\|_{L^{N,\infty}(B_1(0))}  = (N-1)\omega_{N}^{\frac{1}{N}}\,. 
$$ 
In fact, fix $t$ and consider $0\leq r_1(t)\leq 1$ such that $r_1= \frac{(N-1)}{t}g_\alpha (r_1)$ (which exists at least for $t>>1$). Then one observes that  $\{|f_\alpha(x)|> t\}$ is a ball (as $g$ decreases) and in particular $$ \{|f_\alpha(x)|> t\}=\{|x|\leq r_1(t)\}.$$ So that 
$$
\begin{array}{l}
t |\{|f_\alpha(x)|> t\}|^{\frac{1}{N}}= t(r_1(t)^N \omega_N)^{\frac{1}{N}}= t \left(\frac{(N-1)}{t}g_\alpha (r_1)\right)\omega_N^{\frac{1}{N}}=(N-1)g_\alpha(r_1 (t))\omega_N^{\frac{1}{N}}\,,
\end{array}
$$
and, observing that $r_1(t)\to 0$ as $t\to \infty$  (we use that  $g_\alpha$ is bounded) taking the supremum on $t$ we get 
 $$
\left\|f_\alpha(x)\right\|_{L^{N,\infty}(B_1(0))}  = (N-1)\omega_{N}^{\frac{1}{N}}\,. 
$$

In other words, unbounded solutions can be found in the extremal case emphasizing the sharpness of \eqref{sharp}  in order to obtain bounded solutions.

\end{example}

\section*{Acknowledgements}
The authors are partially supported by the Gruppo Nazionale per l’Analisi Matematica, la Probabilità e le loro Applicazioni (GNAMPA) of the Istituto Nazionale di Alta Matematica (INdAM).

\section*{Conflict of interest declaration}

The authors declare no competing interests.

\section*{Data availability statement}
 We do not analyse or generate any datasets, because our work
proceeds within a theoretical and mathematical approach. One
can obtain the relevant materials from the references below.

\end{document}